\NewDocumentCommand{\Tr}{s}{\IfBooleanTF{#1}{\vphantom{\intercal}}{\intercal}}
\NewDocumentCommand{\InvHelper}{}{\scalebox{0.5}[1.0]{\( - \)}1}
\NewDocumentCommand{\Inv}{s}{\IfBooleanTF{#1}{\vphantom{\InvHelper}}{\InvHelper}}
\NewDocumentCommand{\PinHelper}{}{\dagger}
\NewDocumentCommand{\Pin}{s}{\IfBooleanTF{#1}{\vphantom{\PinHelper}}{\PinHelper}} 
\def\@eathelper#1#2\end@eath#3{%
	\if\relax\detokenize{#2}\relax%
	#3%
	\else%
	#1{\@eathelper#2\end@eath{#3}}%
	\fi}
\newcommand\ea[2]{%
	\@eathelper#1\relax\end@eath{#2}}
\NewDocumentCommand{\Vc}{O{} m !g t' t"}{%
  \bm{#1{\mathbf{\MakeLowercase{#2}}}}%
  \IfValueT{#3}{_{#3}}%
  \IfBooleanTF{#4}{^{\Tr}}{%
  	\IfBooleanT{#5}{^{\Tr*}}}%
}
\RenewDocumentCommand{\Vc}{O{} m !g t' t"}{%
  \bm{\ea{#1}{\mathbf{\MakeLowercase{#2}}}}%
  \IfValueT{#3}{_{#3}}%
  \IfBooleanTF{#4}{^{\Tr}}{%
    \IfBooleanT{#5}{^{\Tr*}}}%
}
\NewDocumentCommand{\Mx}{O{} m !g t' t"}{
  \bm{#1{\mathbf{\MakeUppercase{#2}}}}%
  \IfValueT{#3}{_{#3}}%
  \IfBooleanTF{#4}{^{\Tr}}{%
    \IfBooleanT{#5}{^{\Tr*}}}%
}
\RenewDocumentCommand{\Mx}{O{} m !g t' t"}{
  \bm{\ea{#1}{\mathbf{\MakeUppercase{#2}}}}%
  \IfValueT{#3}{_{#3}}%
  \IfBooleanTF{#4}{^{\Tr}}{%
    \IfBooleanT{#5}{^{\Tr*}}}%
}
\NewDocumentCommand{\Tn}{O{} m !g}{%
  \boldsymbol{#1{\mathscr{\MakeUppercase{#2}}}}%
  \IfValueT{#3}{_{#3}}%
}
\RenewDocumentCommand{\Tn}{O{} m !g}{%
  \boldsymbol{\ea{#1}{\mathscr{\MakeUppercase{#2}}}}%
  \IfValueT{#3}{_{#3}}%
}
\NewDocumentCommand{\Tm}{O{} m !g t' t"}{
  \bm{#1{\mathbf{\MakeUppercase{#2}}}_{(#3)}}%
  \IfBooleanTF{#4}{^{\Tr}}{%
    \IfBooleanT{#5}{^{\Tr*}}}%
}
\RenewDocumentCommand{\Tm}{O{} m !g t' t"}{
  \bm{\ea{#1}{\mathbf{\MakeUppercase{#2}}}_{(#3)}}%
  \IfBooleanTF{#4}{^{\Tr}}{%
	\IfBooleanT{#5}{^{\Tr*}}}%
}
\NewDocumentCommand{\tttLRexp}{mmmmmmm}{%
  \IfBooleanTF{#1}{%
    \IfBooleanTF{#2}{%
      \IfBooleanTF{#3}{%
        \IfBooleanTF{#4}%
        {\Biggl#5 #7 \Biggr#6}
        {\biggl#5 #7 \biggr#6}}%
      {\Bigl#5 #7 \Bigr#6}}%
    {\bigl#5 #7 \bigr#6}}%
  {#5 #7 #6}%
}
\NewDocumentCommand{\prn}{ssssm}{\tttLRexp{#1}{#2}{#3}{#4}{(}{)}{#5}}
\NewDocumentCommand{\ang}{ssssm}{\tttLRexp{#1}{#2}{#3}{#4}{\langle}{\rangle}{#5}}
\NewDocumentCommand{\Ang}{m}{\left\langle #1 \right\rangle}
\NewDocumentCommand{\crly}{ssssm}{\tttLRexp{#1}{#2}{#3}{#4}{\{}{\}}{#5}}
\NewDocumentCommand{\sqr}{ssssm}{\tttLRexp{#1}{#2}{#3}{#4}{\lbrack}{\rbrack}{#5}}
\NewDocumentCommand{\Sqr}{m}{\left\lbrack #1 \right\rbrack}
\NewDocumentCommand{\ceil}{ssssm}{\tttLRexp{#1}{#2}{#3}{#4}{\lceil}{\rceil}{#5}}
\NewDocumentCommand{\floor}{ssssm}{\tttLRexp{#1}{#2}{#3}{#4}{\lfloor}{\rfloor}{#5}}
\NewDocumentCommand{\dsqr}{ssssm}{\tttLRexp{#1}{#2}{#3}{#4}{\llbracket}{\rrbracket}{#5}}
\NewDocumentCommand{\nrm}{ssssm}{\tttLRexp{#1}{#2}{#3}{#4}{\|}{\|}{#5}}
\NewDocumentCommand{\Nrm}{m}{\left\| #1 \right\|}
\NewDocumentCommand{\abs}{ssssm}{\tttLRexp{#1}{#2}{#3}{#4}{\vert}{\vert}{#5}}
\NewDocumentCommand{\ttm}{O{k}}{\mathop{\times_{\mspace{-2mu}#1}}}
\NewDocumentCommand{\ttv}{O{k}}{\mathbin{\bar{\smash{\times}\vphantom{\ast}}_{\mspace{-2mu}#1}}}
\DeclareMathOperator{\rank}{rank}
\NewDocumentCommand{\qtext}{m}{\quad\text{#1}\quad}
\NewDocumentCommand{\dsub}{m m t_ m}{#1_{#2_{#4}}}
\NewDocumentCommand{\tttMlist}{m m m m m m m}{%
  #1_{1} #2 %
  \IfBooleanF{#5}{#1_{2} #2} %
  \IfBooleanTF{#6}{ %
    #1_{3} %
    \IfBooleanT{#7}{#2 #1_{4} } %
  }{ %
    \IfBooleanTF{#3}{\cdots}{\dots} #2 #1_{#4} %
  }%
}
\NewDocumentCommand{\tttMSlist}{m m m m m}{%
	#1_{#2} #3 %
	\IfBooleanTF{#4}{\mydots}{\myldots} #3 #1_{#5}
}
\NewDocumentCommand{\tttRMlist}{m m m m m m m}{%
  \IfBooleanTF{#6}%
  {\IfBooleanT{#7}{#1_{4} #2} #1_{3} #2 #1_{2} #2 #1_{1}}%
  {#1_{#4} \IfBooleanF{#5}{#2 #1_{#4-1}} #2 \cdots  #2 #1_{1}}%
}
\NewDocumentCommand{\tttSlist}{m m m m m m}{%
  #1_{1} %
  #2 \IfBooleanTF{#3}{\cdots}{\dots}
  #2 #1_{#5-1} %
  \IfValueT{#6}{#2 #6} %
  #2 #1_{#5+1} %
  #2 \IfBooleanTF{#3}{\cdots}{\dots}
  #2 #1_{#4} %
}
\NewDocumentCommand{\tttRSlist}{m m m m m m}{%
  #1_{#4} %
  #2 \IfBooleanTF{#3}{\cdots}{\dots}
  #2 #1_{#5+1} %
  \IfValueT{#6}{#2 #6} %
  #2 #1_{#5-1} %
  #2 \IfBooleanTF{#3}{\cdots}{\dots}
  #2 #1_{1} %
}
\NewDocumentCommand{\miwc}{s s t! O{i} O{d}}{%
  \tttMlist{#4}{,}{\BooleanFalse}{#5}{#3}{#1}{#2}%
}
\NewDocumentCommand{\siwc}{O{k} O{i} O{d} g}{%
  \tttSlist{#2}{,}{\BooleanFalse}{#3}{#1}{#4}%
}
\NewDocumentCommand{\minc}{s s t! O{i} O{d}}{%
  \tttMlist{#4}{}{\BooleanTrue}{#5}{#3}{#1}{#2}%
}
\NewDocumentCommand{\inds}%
{> { \ReverseBoolean } s O{m} O{j} O{1}}{%
	\tttMSlist{#3}{#4}{\IfBooleanF{#1}{,}}{#1}{#2}%
}
\NewDocumentCommand{\sinc}{O{k} O{i} O{d} g}{%
  \tttSlist{#2}{}{\BooleanTrue}{#3}{#1}{#4}%
}
\let\ex\expandafter
\NewDocumentCommand{\newcommandstring}{m O{0} m}{%
	\ex\newcommand\csname #1\endcsname[#2]{#3}}
\newcounter{@lettercounter}
\NewDocumentCommand{\newlettercommand}{m O{0} m}{%
	\forloop{@lettercounter}{1}{\value{@lettercounter}<26}{
		\edef\letter{\alph{@lettercounter}}
		\edef\Letter{\Alph{@lettercounter}}
		\begingroup
		\edef\@defhelper{\endgroup
			\unexpanded{\newcommandstring}{#1}[#2]{{%
				\unexpanded{\edef\letter}{\letter}
				\unexpanded{\edef\Letter}{\Letter}
				\unexpanded{#3}}}}\@defhelper}}
\newcommand{\NewDocCommandString}[3]{
	\ex\NewDocumentCommand\csname #1\endcsname{#2}{#3}}
\newcommand{\NewLetterCommand}[3]{%
	\forloop{@lettercounter}{1}{\value{@lettercounter}<26}{
		\edef\letter{\alph{@lettercounter}}
		\edef\Letter{\Alph{@lettercounter}}
		\begingroup
		\edef\@defhelper{\endgroup
			\unexpanded{\NewDocCommandString}{#1}{\unexpanded{#2}}{{%
				\unexpanded{\edef\letter}{\letter}
				\unexpanded{\edef\Letter}{\Letter}
				\unexpanded{#3}}}}\@defhelper}}
\tikzset{
	>=stealth',
	edgevals/.style={
		anchor=south,
		font=\footnotesize},
	valuenodes/.style={
		rectangle, 
		rounded corners, 
		draw=black,
		font=\footnotesize,
		minimum height=2em, 
		minimum width=2em,
		text centered, 
		on chain},
	line/.style={draw, thick, <-},
	element/.style={
		tape,
		top color=white,
		bottom color=blue!50!black!60!,
		minimum width=8em,
		draw=blue!40!black!90, very thick,
		text width=10em, 
		minimum height=3.5em, 
		text centered, 
		on chain},
}
\newcommand{\SPMscheme}{\begin{tikzpicture}
		[node distance=1cm,
		start chain,]
		\node[valuenodes,fill=blue!40] (T) {$\TT=\displaystyle {\sum_{i=1}^{r} \lambda_i \a_{i}^{\otimes m}}$};
		\node[valuenodes] (A) {$\cA = \displaystyle\operatorname{Span} \{\a_{1}^{\otimes n},\dots,\a_{r}^{\otimes n}\}$};
		\begin{scope}[start branch=hoejre]
			\node[valuenodes, on chain=going below, yshift=.4cm, xshift=.3cm] (deflate)
			{$\displaystyle \begin{array}{rcl}
					\TT&\gets& \TT-\lambda_i \a_{i}^{\otimes m}%
				\end{array}$};
		\end{scope}
		\node[valuenodes,fill=orange!50] (ain) {$\displaystyle a_i$};
		\node[valuenodes,fill=orange!50] (lai) {$\displaystyle \lambda_i$};
		
		\draw[->] (T) -- node[edgevals] {(A)} (A); %
		\draw[->] (A) -- node[edgevals] {(B)} (ain);%
		\draw[->] (ain) -- node[edgevals] {(C)} (lai); %
		\draw[->] (lai.south)  -|+(0,0)|- node[edgevals,xshift=-1.5cm] {(C)} (deflate.east);
		\draw[->] (deflate.west) |-+(0,0)-| (T.south);
\end{tikzpicture}}
\pgfplotsset{compat=1.17}
\DeclareFontShape{T1}{lmr}{b}{sc}{<->ssub*cmr/bx/sc}{}
\DeclareFontShape{T1}{lmr}{bx}{sc}{<->ssub*cmr/bx/sc}{}
\theoremstyle{plain}
\newtheorem{theorem}{Theorem}[section]
\newtheorem{proposition}[theorem]{Proposition}
\newtheorem{lemma}[theorem]{Lemma}
\theoremstyle{definition}
\newtheorem{definition}[theorem]{Definition}
\theoremstyle{remark}
\newtheorem{remark}[theorem]{Remark}
\crefname{section}{Section}{Sections}
\crefname{subsection}{Subsection}{Subsections}
\crefname{appendix}{Appendix}{Appendices}
\crefname{theorem}{Theorem}{Theorems}
\crefname{proposition}{Proposition}{Propositions}
\crefname{lemma}{Lemma}{Lemmas}
\crefname{conjecture}{Conjecture}{Conjectures}
\crefname{definition}{Definition}{Definitions}
\crefname{remark}{Remark}{Remarks}
\crefname{example}{Example}{Examples}
\crefname{fact}{Fact}{Facts}
\crefname{algorithm}{Algorithm}{Algorithms}
\crefname{figure}{Figure}{Figures}
\crefname{table}{Table}{Tables}
\def\amsbb{\use@mathgroup \M@U \symAMSb}
\let\mathbb\amsbb
\definecolor{fxtarget}{rgb}{0.4,0.4,0.6}
\hfill \rule[2.2ex]{#2}{.1pt}\vspace{-2ex}\\%
	\hfill\textsc{#1} \vspace{-3ex}} 
\algrenewcommand\algorithmicrequire{\textbf{Input:}}
\algrenewcommand\algorithmicensure{\textbf{Output:}}
\newenvironment{keywords}{%
	\begin{center}
		\begin{minipage}{.884\textwidth}
			\textbf{Keywords:}}{
		\end{minipage}
	\end{center}}
\DeclareMathOperator*{\Sym}{sym}
\let\sym\Sym
\DeclareMathOperator*{\Span}{span}
\newcommand{\mat}{\mathop{\mathtt{mat}}}
\newcommand{\opvec}{\mathop{\mathtt{vec}}}
\newcommand{\reshape}{\mathop{\mathtt{reshape}}}
\newcommand{\rgrad}{\operatorname{grad}}
\newcommand{\rhess}{\operatorname{Hess}}
\NewDocumentCommand{\sop}{O{m} m}{#2^{\otimes #1}}
\newcommand{\R}{{\mathbb{R}}}
\newcommand{\Z}{{\mathbb{Z}}}
\NewDocumentCommand{\W}{O{}}{\Tn[#1]{W}}
\NewDocumentCommand{\U}{O{}}{\Tn[#1]{U}}
\NewDocumentCommand{\V}{O{}}{\Tn[#1]{V}}
\NewDocumentCommand{\TT}{O{}}{\Tn[#1]{T}}
\NewDocumentCommand{\Tlc}{}{t}
\NewDocumentCommand{\Id}{}{\Mx{I}}
\NewDocumentCommand{\dblfact}{ m !o !O{#2-1}}
{\IfNoValueTF{#2}%
	{\frac{(2#1)!}{#1!2^{#1}}}%
	{\frac{(#2)!}{(#1)!2^{#1}}}}
\NewDocumentCommand{\bigger}{sss}{%
\IfBooleanTF{#1}{%
	\IfBooleanTF{#2}{%
		\IfBooleanTF{#3}%
		{\Biggl}
		{\biggl}}%
	{\Bigl}}%
{\bigl}}
\NewDocumentCommand{\PsiFn}{O{d}mmmm}%
{\Psi^{(#1)}\prn*{{#2}, \!{#3} \,, {#4}, \!{#5}}}
\NewDocumentCommand{\fGMM}{O{d}}
{F^{(#1)}}
\NewDocumentCommand{\fdeb}{O{d}}
{\varGamma^{(#1)}}
\algrenewcommand\algorithmicrequire{\textbf{Input:}}     
\algrenewcommand\algorithmicensure{\textbf{Output:}}
\NewDocumentCommand{\tB}{e{^}}
{\IfValueTF{#1}{\Mx[\tilde]{B}\vphantom{\Mx{B}}^{#1}}{\Mx[\tilde]{B}}}
\NewDocumentCommand{\sqrenum}{ssss t! m m}
{\edef\helpersqrenum{\IfBooleanT{#1}{*}\IfBooleanT{#2}{*}\IfBooleanT{#3}{*}\IfBooleanT{#4}{*}}
\expandafter\sqr\helpersqrenum{\tttMlist{#6}{,}{\BooleanFalse}{#7}{#5}{\BooleanFalse}{\BooleanFalse}}}
\NewDocumentCommand{\Sqrenum}{t! m m}
{\Sqr{\tttMlist{#2}{,\,}{\BooleanFalse}{#3}{#1}{\BooleanFalse}{\BooleanFalse}}}
\newcommand{\dotp}[2]{#1\cdot #2}
\newcommand{\dotpb}[2]{\left<#1 , #2\right>}
\newcommand{\colspan}{\operatorname{colspan}}
\RenewDocumentCommand{\a}{O{} t' t"}{%
	\bm{\ea{#1}{\mathbf{a}}}%
	\IfBooleanTF{#2}{^{\Tr}}{%
		\IfBooleanT{#3}{^{\Tr*}}}%
}
\NewDocumentCommand{\x}{O{} t' t"}{%
	\bm{\ea{#1}{\mathbf{x}}}%
	\IfBooleanTF{#2}{^{\Tr}}{%
		\IfBooleanT{#3}{^{\Tr*}}}%
}
\NewDocumentCommand{\y}{O{} t' t"}{%
	\bm{\ea{#1}{\mathbf{y}}}%
	\IfBooleanTF{#2}{^{\Tr}}{%
		\IfBooleanT{#3}{^{\Tr*}}}%
}
\RenewDocumentCommand{\v}{O{} t' t"}{%
	\bm{\ea{#1}{\mathbf{v}}}%
	\IfBooleanTF{#2}{^{\Tr}}{%
		\IfBooleanT{#3}{^{\Tr*}}}%
}
\newcommand{\dimL}{d}
\newcommand{\mydots}{\> \! \! \cdots}
\newcommand{\myldots}{\ldots}%
\newcommand\mytag[2]{%
	\def\@currentlabel{#1}%
	(#1)\label{#2}%
}
\newcommand{\tageq}[1]{%
	\refstepcounter{equation}%
	\mytag{\theequation}{#1}}
\newcommand\scsubsection{\@startsection{subsection}{2}{.25in}%
	{1.3ex\@plus .5ex \@minus .2ex}%
	{-.5em \@plus -.1em}%
	{\reset@font\normalsize\scshape\bfseries}}
\title{Subspace Power Method for Symmetric Tensor Decomposition%
\thanks{The authors contributed equally.}}
\date{}
\author{Joe Kileel\thanks{Department of Mathematics and Oden Institute, University of Texas at Austin, Austin, TX, USA
		(\texttt{jkileel@math.utexas.edu}).} %
	\and Jo\~ao M. Pereira\thanks{Department of Mathematics, University of Georgia, Athens, GA, USA
		(\texttt{jpereira@uga.edu}).}}
\begin{document}
	
\pagestyle{fancy}
\fancyhead{}
\fancyhead[R]{Subspace Power Method}
\fancyhead[L]{J. Kileel and J. M. Pereira}

\maketitle

\begin{abstract}
	We introduce the \textit{Subspace Power Method (SPM)} for calculating the CP decomposition of low-rank real symmetric tensors.
	This algorithm calculates one new CP component at a time, alternating between applying the shifted symmetric higher-order power method (SS-HOPM) to a certain modified tensor, constructed from 
	a matrix flattening of the original tensor; and using appropriate deflation steps.
	We obtain rigorous guarantees for SPM regarding convergence and global optima for input tensors of dimension $d$ and order $m$ of CP rank up to $\mathcal{O}(d^{\lfloor m/2\rfloor})$, via results in classical algebraic geometry and optimization theory.
	As a by-product of our analysis we prove that SS-HOPM converges unconditionally, settling a conjecture  in [Kolda, T.G., Mayo, J.R.: Shifted power method for computing tensor eigenpairs. SIAM Journal on Matrix Analysis and Applications 32(4), 1095–1124 (2011)].
	We present numerical experiments which demonstrate that SPM is efficient and robust to noise, being up to one order of magnitude faster than state-of-the-art CP decomposition algorithms in certain experiments.
	Furthermore, prior knowledge of the CP rank is not required by SPM. %
\end{abstract}

\begin{keywords}
	CP decomposition, symmetric tensor, power method, rank-one update, trisecant lemma, convergence analysis
\end{keywords}

\section{Introduction}\label{sec:intro}

A tensor is a multi-dimensional array \cite{kolda2009tensor}. 
A symmetric tensor is an array unchanged by permutation of indices.   
That is, $\TT$ of order $m$ is symmetric if for each index $(j_1, \ldots, j_m)$ and permutation $\sigma$ it holds $\TT_{j_1, \mydots, j_m} = \TT_{j_{\sigma(1)}, \mydots, j_{\sigma(m)}}.$   

Symmetric tensors arise naturally in many data processing applications.  They occur as higher order moments of a dataset, generalizing the mean and covariance of a random vector; as derivatives of multivariate real-valued functions, generalizing the gradient and Hessian of a function; and as adjacency tensors for hypergraphs, generalizing the adjacency matrix of graphs.
Being able to \textit{decompose} symmetric tensors is critical in domains including blind source separation \cite{cardoso1998blind,foobi2007}, independent component analysis \cite{hyvarinen2004independent,podosinnikova2019overcomplete}, antenna array processing \cite{dogan1995applications, chevalier2005virtual}, 
telecommunications \cite{van1996analytical,chevalier1999optimal}, pyschometrics \cite{carroll1970analysis}, chemometrics \cite{bro1997parafac}, %
magnetic resonance imaging \cite{basser2007spectral} and latent variable estimation for Gaussian mixture models \cite{ge2015learning,pereira2022tensor}, topic models and hidden Markov models \cite{anandkumar2014tensor}. %

The present paper presents a new algorithm for computing the celebrated \textit{real symmetric CP decomposition:}
\begin{equation}\label{eq:tensordecdef}
\TT = \sum_{i=1}^r \lambda_i \a_{i}^{\otimes m}.	
\end{equation} 
In \eqref{eq:tensordecdef}, the left-hand side is the given input, a real symmetric tensor $\TT$ of size $\dimL \times ... \times \dimL$ ($m$ times).
The task is to compute the right-hand side,
where $r$ is an integer, $\lambda_i\in \R$ are scalars, $\a_{i}\in \R^\dimL$ are unit-norm vectors, and $\a_{i}^{\otimes m}$ denotes the $m$-th tensor power (or outer product) of $\a_{i}$, that is,
\begin{displaymath}
(\a_{i}^{\otimes m})_{j_1 \cdots  j_{m}} = (\a_{i})_{j_1} \> \! \! \cdots (\a_{i})_{j_{m}}.
\end{displaymath}
For any tensor $\TT$, there exists an expression of type \eqref{eq:tensordecdef}, since $\{ \a^{\otimes m} :  \Vert \a \Vert =1 \}$ spans the vector space of all symmetric tensors \cite{landsberg2012tensors}.
Thus one requires $r$ to be minimal in \eqref{eq:tensordecdef}, and declares this integer to be the \textit{CP rank} of $\TT$.
It is a crucial fact that, generically, CP decompositions are \textit{unique} for rank-deficient tensors:
if $\TT$ satisfies \eqref{eq:tensordecdef} with $r< \frac{1}{\dimL} \binom{\dimL+m-1}{m} = \mathcal{O}(\dimL^{m-1})$, $\dimL > 6$ and $(\lambda_i, \a_{i})$ are Zariski-generic, then the rank of $\TT$ is indeed $r$ and the minimal decomposition \eqref{eq:tensordecdef} is unique (up to permutation and sign flips of $\a_{i}$).  Generic uniqueness is a result in algebraic geometry 
\cite{chiantini2017generic,ballico2005weak,chiantini2002weakly}.

While low-rank symmetric CP rank decompositions exist and are generically unique, computing them is another matter.  Hillar--Lim showed tensor decomposition is NP-hard in general \cite{hillar2013most}. 
Nonetheless, a number of works have sought efficient algorithms for sufficiently low-rank tensors, e.g., \cite{harshman1970foundations,foobi2007,anandkumar2014tensor,kolda2015numerical,ge2017optimization,ma2016polynomial,hopkins2016fast,nie2017low, johnston2023computing}.  Conjecturally in theoretical computer science, there exist efficient algorithms  that succeed with high probability in decomposing random tensors of rank on the order of the square root of the number of tensor entries, but not so for ranks substantially more \cite{wein2023average}. %
From a numerical linear algebra standpoint, producing practically efficient methods -- even with restricted rank -- is a further challenge.

\subsection{Our contributions}

In this paper, we develop a numerical algorithm that accepts $\TT$ as input, and aims to output the minimal decomposition \eqref{eq:tensordecdef}, up to trivial ambiguities, provided 
\begin{equation}\label{eq:rank_threshold}
\begin{cases}
r \leq\binom{\dimL+n-2}{n-1} &\text{if  } m=2n-1,\medskip\\
r \leq\binom{\dimL+n-1}{n} - \dimL &\text{if  } m=2n.
\end{cases}
\end{equation}
In devising the method, we assume an exactly low-rank decomposition exists, though simple adjustments allow the method to run on noisy tensors.
Notably, the new algorithm outperforms existing state-of-art methods by roughly one order of magnitude in terms of speed, in experiments with ground-truth tensors of moderate rank.  The algorithm also does not require knowledge of $r$ in advance, and instead estimates the rank.	 Thirdly, the method is robust to additive noise. %
 
We call the method the \textit{Subspace Power Method (SPM)}. To give a glimpse, it consists of three parts.
\begin{enumerate}[label=(\Alph*), ref=item:SPM_step_\Alph*]
\item \textsc{Extract Subspace}: We flatten $\TT$ to a matrix, and compute its singular vector decomposition. 
From this, we extract the subspace of order-$n$ tensors spanned by $\a_{i}^{\otimes n}$, $i=1,\dots,r$, where $n = \lceil \frac{m}{2}\rceil$, denoted by $\cA$.
\item \textsc{Power Method}: We seek one rank-1 point $\a_{i}^{\otimes n}$ in the subspace $\mathcal{A}$.
For this end, SS-HOPM \cite{kolda2009tensor} for computing tensor eigenvectors is applied to an appropriately modified tensor, constructed using $\mathcal{A}$.
\item \textsc{Deflation}: We solve for the corresponding scalar $\lambda_i$, and update the low-rank matrix factorization to be that of the flattening of $\TT - \lambda_i a_i^{\otimes m}$.
\end{enumerate}
The pipeline repeats $r$ times, until all $(\lambda_i, \a_{i})$ are recovered.
\Cref{fig:SPM} shows a schematic of SPM. See \cref{alg:estd} in \cref{sec:tdalgdesc} for full details. 

\begin{figure} 
\begin{center}
\SPMscheme
\caption{\textit{Schematic of the Subspace Power Method.} The input is the symmetric tensor $\TT$ (the low-rank decomposition of $\TT$ is unknown). The output is $(\lambda_i,\a_{i})_{i=1}^r$. SPM has three steps: (A) \textsc{Extract Subspace}, (B) \textsc{Power Method}, (C) \textsc{Deflate}.} \label{fig:SPM}
\end{center}	
\end{figure} 

 The paper's other contribution is that we prove various theoretical guarantees. 
 These come in two flavors.
 Firstly, we characterize the \textit{global optima} in the reformulation of tensor decomposition used here.  Specifically in \cref{prop:tdspanV,prop:subspacerank1generic}, we use the trisecant lemma from algebraic geometry to show that the only rank-$1$ points in the subspace $\mathcal{A}$ (from Step A above) are the CP components tensored-up, i.e., $\a_{i}^{\otimes n}$ (up to scale).  
Secondly, we establish \textit{convergence guarantees} for the \textsc{Power Method} iteration (Step B above).  This analysis is summarized by \cref{thm:power}.  In particular, using the \L ojasiewicz inequality, we prove \textsc{Power Method} converges from any initialization. %
 In fact this is an important technical contribution: while proving it, we also settle a conjecture of Kolda--Mayo that their SS-HOPM method for computing Z-eigenvectors of symmetric tensors always converges (see page 1107 of \cite{kolda2011shifted}). 
 Previous convergence results applied to generic tensors \cite{kolda2011shifted} or to certain applications \cite{tang2021convergence}.
Qualitative bounds are also obtained on the rate of convergence for Step B.  
Further, we prove that \textsc{Power Method} converges to a second-order optimal point for almost all initializations, and each CP component vector $\pm \a_{i}$ is an attractive fixed point.  
 
\subsection{{Comparison to prior art}}
SPM (\cref{alg:estd}) integrates various ideas %
in the literature into a single natural algorithm for symmetric tensor decomposition, with innovations in computational efficiency and convergence theory.

Step A of SPM is a variation of the classical Catalecticant Method \cite{iarrobino1999power}.  This goes back to Sylvester of the $19$th century \cite{sylvester1851lx}. 
Although Sylvester's work is quite well-known, SPM is, to our knowledge, the first \textup{efficient numerical} method for tensor decomposition based on this formulation of tensor decomposition.
The work \cite{brachat2010symmetric} proposed using symbolic techniques with Sylvester's Catalecticant Method to decompose symmetric tensors, but that algorithm appears slow already on modest-sized examples.
Other related works are \cite{bhaskara2019smoothed, johnston2023computing}, which generalize \cite{foobi2007} and enjoy strong theoretical guarantees, but these lack an implementation or any numerical demonstrations at present. 
Further algebraic works are \cite{iarrobino1999power} and  \cite{oeding2013eigenvectors}.
Perhaps most related to ours is the latter by Oeding and Ottaviani.  
Importantly however, we differ in that \cite{oeding2013eigenvectors} proposes  using standard polynomial-solving techniques, e.g., Gr\"obner bases, to compute $(\lambda_i, \a_{i})$ after reformulating tensor decomposition via Sylvester's Catalecticant Method. Unfortunately, a Gr\"obner basis approach is impractical already for small-sized problem dimensions, since Gr\"obner bases have doubly exponential running time, and require exact arithmetic.  By contrast, %
SPM consists of fast numerical iterations 
and optimized numerical linear algebra. %

Next, Step B of SPM connects with Kolda-Mayo's SS-HOPM method for computing tensor eigenvectors \cite{kolda2011shifted}.  
Step B may be viewed as the  higher-order power method applied to a different symmetric tensor $\TT[\widetilde]$, constructed from the subspace extracted in Step A, see \eqref{eq:Tmodify}. %
SPM identifies the Z-eigenvectors of $\TT[\widetilde]$ (computable by SS-HOPM) with the CP tensor components $\a_{i}$ of $\TT$.  
This is important because the power method applied directly to $\TT$ does \textit{not} recover the CP components of $\TT$. 
In analyzing the convergence of Step~B we settle a conjecture of \cite{kolda2011shifted} on SS-HOPM in general.

Finally, Step C of SPM on deflation is related to Wedderburn's rank reduction formula \cite{chu1995rank}.  For maximal efficiency, we derive an optimized implementation of the procedure using Householder reflections, to avoid recomputation as much as possible.

In other respects, 
\cref{alg:estd} bears resemblance to De Lathauwer {et al.}'s Fourth-Order-Only Blind Identification (FOOBI) algorithm for fourth-order symmetric tensor decomposition \cite{foobi2007}. %
It is also related to the methods for finding low-rank elements in subspaces of matrices in \cite{podosinnikova2019overcomplete, fornasier2018identification, nakatsukasa2017finding}, and the asymmetric decomposition method in \cite{phan2015tensor} repeatedly reducing the tensor length in given directions. 

For third-order tensors, the simultaneous diagonalization algorithm \cite{leurgans1993decomposition} (often attributed to Jennrich) is an efficient and provable decomposition method for tensors of size $\dimL \times \dimL \times \dimL$ and rank less than or equal to $\dimL$.  It uses matrix eigendecompositions in a straightforward manner. For higher-order tensors of order $m \geq 4$, one may flatten into a third-order tensor and apply simultaneous diagonalization provided the rank is $\mathcal{O}(\dimL^{ \lfloor (m-1)/2 \rfloor})$. Nevertheless, the algorithm is known to suffer from numerical instabilities \cite{beltran2019pencil}, motivating the study of  methods that are more stable to noise. 

As far as leading practical algorithms go, many existing ones use direct nonconvex optimization, attempting to minimize the squared residual
\begin{equation}\label{eq:sq_res}
\Big{\|}\TT - \sum_{i=1}^{r} \lambda_i \a_{i}^{\otimes m}\Big{\|}^2.
\end{equation}
This is performed, e.g., by gradient descent or symmetric variants of alternating least squares \cite{kolda2015numerical}.
In \cref{sec:tdsimulations}, we compare SPM to state-of-the-art numerical methods, including a non-linear least squares method (NLS) \cite{vervliet2016}, that aims to minimize \eqref{eq:sq_res} using Gauss-Newton iterations. The comparison is done using standard random ensembles for symmetric tensors, as well as ``worse-conditioned'' tensors with correlated components.  
We also compare to quite different but state-of-the-art theoretical methods: simultaneous diagonalization \cite{leurgans1993decomposition}, the method of generating polynomials \cite{nie2017generating, nie2017low} and FOOBI \cite{foobi2007, cardoso1991super}.%

\subsection{Organization of the paper}

The paper is organized as follows. \Cref{sec:model} establishes notation and basic definitions.  %
\Cref{sec:tdalgdesc} details the SPM algorithm. %
\Cref{sec:tdpowermethod} analyzes the convergence of the power iteration (Step B in \cref{fig:SPM}).
\Cref{sec:tdsimulations} presents numerics, with comparisons of runtime and noise sensitivity to other methods.
\Cref{sec:discussion} concludes.
The appendices contain certain technical proofs.
Matlab and Python code for SPM is available at  
{\texttt{\url{https://www.github.com/joaompereira/SPM}}}.


\section{Definitions and Notation} \label{sec:model} %

\subsection{Tensors and tensor products}
Let $\cT_\dimL^m = (\R^{\dimL})^{\otimes m} \cong \R^{\dimL^m} $ denote the vector space of real tensors of \textit{order} $m$ and \textit{length} $\dimL$ in each mode. 
It is a Euclidean space, with the Frobenius inner product and norm.  
If $\TT \in \cT_\dimL^m$, then $\TT_{\inds} = \Tlc_{\inds}$ 
is the entry indexed by $(\inds*) \in [\dimL]^{m}$ where $[\dimL] = \{1, \ldots, \dimL\}$. 
For tensors $\TT \in  \cT_\dimL^{m}$ and $\Tn{U} \in \cT_\dimL^{\tilde m}$, their \textit{tensor product}  in $\cT_\dimL^{m+\tilde m}$ is 
\begin{equation}
(\TT\otimes \Tn{U})_{\inds*[m+\tilde m][j][1]}=\Tlc_{\inds[m]} u_{\inds[m+\tilde m][j][m+1]} \!\! \quad \forall \, (\inds*[m+\tilde m])\in [\dimL]^{m+\tilde m}.
\end{equation}
\noindent The \textit{tensor power} $\TT^{\otimes p}\in \cT_\dimL^{pm}$ is the tensor product of $\TT$ with itself $d$ times. 
The \textit{tensor dot product} (or \textup{tensor contraction}) between $\TT \in \cT_\dimL^m$ and $\Tn{U} \in \cT_\dimL^{\tilde m}$, with $m\ge \tilde m$, is the tensor in $\cT_\dimL^{m-\tilde m}$ defined by
\begin{equation}
(\TT\cdot \Tn{U})_{\inds*[m][j][\tilde m+1]}=\sum_{j_1=1}^\dimL\cdots\sum_{j_{\tilde m}=1}^\dimL \Tlc_{j_1 \mydots j_m}u_{j_1 \mydots  j_{\tilde m}}.
\end{equation}
If $m=\tilde m$, contraction coincides with the inner product, i.e., $\dotp{\TT}{\Tn{U}}=\dotpb{\TT}{\Tn{U}}$. %
For $\TT \in \cT_\dimL^m$, a (real normalized) \textit{Z-eigenvector/eigenvalue pair} 
$(\Vc{v}, \lambda) \in \R^{d} \times \R$ 
is a vector/scalar pair satisfying
$ \TT \cdot \Vc{v}^{\otimes (m-1)} = \lambda \Vc{v} $
and $\Vc{v} \in \mathbb{S}^{d-1}$, see \cite{lim2005singular, qi2005eigenvalues}.  Here $\mathbb{S}^{d-1}$ denotes the unit-sphere in $\mathbb{R}^d$ with respect to the Euclidean norm $ \| \cdot \| := \| \cdot \|_2$.

There is a useful relation between inner and outer products of tensors.
\begin{lemma}[Inner product of tensor products \cite{hackbusch2019tensor}]
	\label{lemma:inner_tensor_powers}
	For tensors $\TT_1,\TT_2 \in \cT_\dimL^m$, $\U_1,\U_2 \in \cT_\dimL^{\tilde m}$,
	we have 
	\begin{displaymath}
		\Ang{\TT_1 \otimes \U_1, \TT_2 \otimes \U_2}
		=  \Ang{\TT_1 , \TT_2}  \Ang{\U_1, \U_2}.
	\end{displaymath}
	In particular, for  vectors $\Vc{u},\Vc{v}\in \R^\dimL$,
	we have $\Ang{\Vc{v}^{\otimes m}, \Vc{u}^{\otimes m}} = \Ang{\Vc{v}, \Vc{u}}^{m}$.
\end{lemma}

\subsection{Symmetric tensors}

\begin{definition}\label{def:sym}
	A tensor $ \TT \in \cT_\dimL^m$ is \emph{symmetric} if it is unchanged by any permutation of indices, that is, %
	\begin{equation}
	\Tlc_{\inds} = \Tlc_{\inds[\sigma(m)][j][\sigma(1)]} \!\! \quad \forall \, (\inds*)\in [\dimL]^{m}  \textup{ and } \sigma\in \Pi^m,
	\end{equation}
	where $\Pi^m$ is the permutation group on $[m]$. We denote by $\cS_\dimL^m$ the vector space of real symmetric tensors of order $m$ and length $\dimL$. A tensor $\TT\in \cT^m_\dimL$ may be \textit{symmetrized} by %
	$\Sym : \cT^m_d \rightarrow \cS^m_d$  defined as
	\begin{equation}\label{eq:symmetrizing_operator}
	\Sym (\TT)_{\inds*} = \frac{1}{m!}\sum_{\sigma\in \Pi^m} \Tlc_{\inds[\sigma(m)][j][\sigma(1)]}\quad \forall \, (\inds*)\in [\dimL]^{m}.
	\end{equation}
\end{definition}

\begin{lemma}[\cite{hackbusch2019tensor}]
	\label{lemma:sym_orthogonal_projection}
	The $\sym$ operation  in \cref{def:sym} is an orthogonal projection
	and so is self-adjoint.
	In particular, for a vector $\Vc{v} \in \R^n$ and tensor $\TT \in \cT^{d}_n$,
	\begin{displaymath}
		\Ang{\sym(\TT),\sop{\Vc{v}}}=\Ang{\TT,\sop{\Vc{v}}}.
	\end{displaymath}
\end{lemma}

\subsection{Symmetric tensor decomposition}

\begin{definition}\label{def:decomp}
For a symmetric tensor $\TT \in \cS^m_\dimL$, a %
\textup{real symmetric CP decomposition} is an expression
\begin{equation}\label{eqn:tensorAgain}
\TT = \sum_{i=1}^{r} \lambda_i \a_{i}^{\otimes m}, 
\end{equation}
where $r \in \Z$ is smallest possible, $\lambda_i \in \R$, and $\a_{i} \in \R^\dimL$ (without loss of generality, $\Vert \a_{i} \Vert_{2} = 1$).  The minimal $r$ is the \textup{real symmetric CP rank} of $\TT$.
\end{definition}

\begin{remark}
Some guarantees in this paper hold only for \textit{Zariski-generic} $\a_{i}$ and $\lambda_i$ in \eqref{eqn:tensorAgain}.  This means that there exists a polynomial $p$ such that the guarantees are valid whenever \linebreak $p(\a_{1}, \ldots, \a_{r}, \lambda_1, \ldots, \lambda_r) \neq 0$ occurs, and furthermore $p(\a_{1}, \ldots, \a_{r}, \lambda_1, \ldots, \lambda_r) \neq 0$ holds for \textit{some} unit-norm $\a_{i} \in \R^{\dimL}$ and $\lambda_i \in \R$ (see \cite{harris1992algebraic} for background).
In particular, this implies the guarantees hold \text{with probability $1$} provided $\a_{i}$ and $\lambda_i$ are drawn from any absolutely 
	continuous probability distributions on the sphere and real line. 
\end{remark}

\subsection{Unfolding tensors}
\begin{definition}
We let $\zeta_{d_1,\dots,d_m}$ be the \textup{index map} that maps multi-indexes in $\prod[d_i]$ to the corresponding indices in $\left[\prod d_i\right]$ in lexicographic order. 
Formally, %
$$\zeta_{d_1,\dots,d_m}(i_1,\dots, i_m) = 1 + \sum_{j=1}^m (i_j - 1)\prod_{k=0}^{j-1} d_{k}$$
with the convention $d_0=1$. 
\end{definition}

\begin{definition}\label{def:flatten}
Let $\TT \in \cT_\dimL^{m}$. The \textup{vectorization} of $\TT$, denoted $\opvec(\TT) \in \R^{\dimL^m}$, is 
defined by 
$$
\opvec(\TT)_{\zeta_{d,\dots,d}(i_1,\dots, i_m)} = \TT_{i_1, \ldots, i_m},
$$
while \textup{unvectorization} of a vector in $\R^{\dimL^m}$ is defined as the inverse operation.

For integers $D_1$, $D_2$ such that $D_1 D_2 = d^m$, define the function $\reshape$
so that $\reshape(\TT) \in \mathbb{R}^{D_1 \times D_2}$ and
$$\reshape(\TT, D_1, D_2)_{i_1 i_2} = \opvec(\TT)_{\zeta_{D_1,D_2}(i_1, i_2)}.$$
For $1\le n< m$, we denote the \textup{$1$-to-$n$ matrix unfolding} (or $1$-to-$n$ matrix flattening) of $\TT$ by 
$\mat_n(\TT) = \reshape(\TT, d^n, d^{m-n})$. 
\end{definition}

The next definition is used to describe unfoldings of tensors with low CP rank.

\begin{definition} \label{def:khatri}
Let $\Mx{A} \in \R^{\dimL \times r}$ be a matrix with columns $\a_{1}, \ldots, \a_{r} \in \R^{d}$.  The \textup{$n$th Khatri-Rao power of $\Mx{A}$}, denoted $\Mx{A}^{\bullet n} \in \R^{\dimL^n \times r}$, is defined to be the matrix 
with columns $\opvec(\a_{1}^{\otimes n}), \ldots,\linebreak \opvec(\a_{r}^{\otimes n}) \in \R^{\dimL^n}$.
\end{definition}


\section{Algorithm Description}\label{sec:tdalgdesc}

In this section, we derive SPM (\cref{alg:estd}). %
The input is a real symmetric tensor $\TT \in \cS_\dimL^{m}$ ($m \geq 3$).  For purposes of method development, we assume that $\TT$ admits an exact low-rank decomposition \eqref{eq:tensordecdef},
where $r = \mathcal{O}\prn*{\dimL^{\lfloor\tfrac{m}{2}\rfloor}}$.
The output is $(\lambda_i, \a_{i})_{i=1}^{r}$ for $i = 1, \ldots, r$ (up to sign ambiguity).

For the remainder of this section, we fix a positive integer $n< m$, and consider only the $1$-to-$n$ matrix flattening $\mat_n$, abbreviated as $\mat$.
SPM applies to any matrix unfolding, but in our discussion and implementation of SPM we often set $n=\lceil \tfrac{m}{2}\rceil$ by default.
This choice maximizes the greatest rank for which \cref{alg:estd} 
works.

\subsection{Three steps}
It is convenient to divide the algorithm description into three steps: \textsc{Extract Subspace}, \textsc{Power Method} and \textsc{Deflate}.

\scsubsection*{Extract Subspace}

Observe the \textup{tensor decomposition} \eqref{eq:tensordecdef} of $\TT$ is equivalent to a \textup{matrix factorization}
of the flattening of $\TT$ (\cref{def:flatten}):
\begin{equation}\label{eq:mattensor}
\nonumber \mat(\TT) = \sum_{i=1}^r \lambda_i \mat\left(\a_{i}^{\otimes m}\right)= \sum_{i=1}^r \lambda_i \opvec(\a_{i}^{\otimes n}) \opvec(\a_{i}^{\otimes (m-n)})^{\Tr}.
\end{equation}
where $n = \lceil \tfrac{m}{2}\rceil$. 
Let $\Mx{A} \in \R^{\dimL \times r}$ be the matrix with columns $\a_{1}, \ldots, \a_{r} \in \R^\dimL$, and $\Mx{\Lambda} \in \R^{r \times r}$
be the diagonal matrix with entries $\lambda_1, \ldots, \lambda_r$. Then \eqref{eq:mattensor} reads 
\begin{equation} \label{eq:nicemat}
\mat(\TT) = \Mx{A}^{\bullet n} \Mx{\Lambda} \, (\Mx{A}^{\bullet (m-n)})^{\Tr}.
\end{equation}
Define the subspace 
\begin{equation} \label{eq:def_cA}
\cA \,= \, \operatorname{span}\{ \a_{1}^{\otimes n}, \ldots, \a_{r}^{\otimes n} \} \, \subset \, \cS_\dimL^n. 
\end{equation}
It is the column space of $\Mx{A}^{\bullet n} $ upon unvectorization.

\textsc{Extract Subspace} obtains an orthonormal basis for $\cA$ from $\mat(\TT)$ in \eqref{eq:nicemat}.

\begin{proposition}\label{prop:tdspanV}  The following statements hold true:
\begin{itemize}\setlength\itemsep{0.2em}
\item Let $p = \min(n, m-n)$ and
assume $\a_{1}^{\otimes p}, \ldots, \a_{r}^{\otimes p}$ are linearly independent, and $\lambda_1, \ldots, \lambda_r$ are nonzero. 
Then $\mat(\TT)$ has rank $r$.  Moreover 
if $\mat(\TT) = \Mx{U} \Mx{S} \Mx{V}^{\Tr}$ is a thin SVD, then the columns of $\Mx{U}$ give an orthonormal basis of $\cA$ (after unvectorization).
\item If $r \leq \binom{\dimL+p-1}{p}$ and $\a_{1}, \ldots, \a_{r}$ are Zariski-generic, then $\a_{1}^{\otimes p}, \ldots,\a_{r}^{\otimes p}$ are linearly independent.
\end{itemize}
\end{proposition}
\begin{proof}
	Assume $\a_{1}^{\otimes p}, \ldots, \a_{r}^{\otimes p} $ are linearly independent, or equivalently 
    $\rank(\Mx{A}^{\bullet p})=r$.  We claim this implies $\rank(\Mx{A}^{\bullet q})=r$ where $q=\max(n, m-n)$.
 Suppose $\alpha_1,\dots,\alpha_r \in \mathbb{R}$ satisfy $\alpha_1 \a_{1}^{\otimes q} + \cdots + \alpha_r \a_{r}^{\otimes q}=0$. Contract both sides $q-p$ times with a vector $\Vc{z} \in \mathbb{R}^d$ such that $\Vc{z}' \a_{i} \neq 0$ for all $i=1,\dots, r$
  to obtain $\alpha_1 (\Vc{z}' \a_{1})^{q-p} \a_{1}^{\otimes p} + \cdots + \alpha_r (\Vc{z}' \a_{r})^{q-p} \a_{r}^{\otimes p}=0$.  
  This implies $\alpha_i (\Vc{z}' \a_{i})^{q-p} = 0$ by linear independence of $\a_{1}^{\otimes p}, \ldots, \a_{r}^{\otimes p}$ for $i = 1, \ldots, r$, whence $\alpha_i = 0$ for $i=1, \ldots, r$ as claimed.

Now consider \eqref{eq:nicemat}.  Matrices $\Mx{A}^{\bullet n}$ and $\Mx{\Lambda} \, (\Mx{A}^{\bullet (m-n)})^{\Tr}$ have full column and row rank respectively, both equal to $r$.  Thus $\mat(\TT)$ has rank $r$, and its column space equals the column space of $\Mx{A}^{\bullet n}$.   Obviously, the column space can be computed by a thin SVD.
	
	For the second bullet, assume $r \leq \binom{\dimL+m-n-1}{m-n}$. 
Note $\a_{1}^{\otimes (m-n)}, \ldots, \a_{r}^{\otimes (m-n)} $ are linearly independent if and only all $r \times r$ minors of $\Mx{A}^{\bullet (m-n)}$  are nonzero. 
This is a Zariski-open condition on  $\a_{1}, \ldots, \a_{r}$.
It holds generically because it holds for some $\a_{1}, \ldots, \a_{r}$, as rank-1 symmetric tensors span $\cS_{\dimL}^{m-n}$ and $\dim(\cS_{\dimL}^{m-n})=\binom{\dimL+m-n-1}{m-n}$.
\end{proof}

In view of the proposition, SPM extracts the subspace $\cA \subset \cS_\dimL^n$ from a thin SVD of $\mat(\TT)$, see \cref{alg:estd}.
For noisy input tensors, we must truncate the full-rank SVD of $\mat(\TT)$.  A numerical threshold is chosen to set $r$, 
where all singular values below the threshold are discarded.
This procedure in the noisy case, and \cref{prop:tdspanV} in the noiseless case, enables SPM to estimate the rank without prior knowledge of $r$.

\scsubsection*{Power Method}

The next step of SPM is to find a rank-$1$ point in $\cA$.
The following result is the essential underpinning.

\begin{proposition}\label{prop:subspacerank1generic}
	Let $r \leq \binom{\dimL+n-1}{n} - \dimL$.  
 Then for Zariski-generic $\a_{1}, \ldots, \a_{r}$,  the only rank-$1$ tensors in  $\cA = \operatorname{span} \{\a_{1}^{\otimes n}, \ldots, \a_{r}^{\otimes n} \}\subset \cS_\dimL^n$ are $\a_{1}^{\otimes n}, \ldots, \a_{r}^{\otimes n}$ (up to scale).%
\end{proposition}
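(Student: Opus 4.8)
The plan is to use the trisecant lemma (also called the tri\-secant variety lemma) from classical algebraic geometry, applied to the Veronese variety. The key geometric object is the Veronese variety $\nu_n(\mathbb{P}^{L-1}) \subset \mathbb{P}(\Sym\cT_L^n)$, whose affine cone is $\mathcal{V}_{L,n}$, the set of rank $\leq 1$ symmetric tensors. I would phrase everything projectively: the span $\cA$ corresponds to a linear subspace $\mathbb{P}(\cA) \subset \mathbb{P}(\Sym\cT_L^n)$ of dimension $R-1$ (generically, by Proposition \ref{prop:tdspanV} applied with the bound $R \leq \binom{L+n-1}{n}$, which is implied here), spanned by the $R$ generic points $[a_i^{\otimes n}]$ on the Veronese. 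The claim becomes: this linear space meets $\nu_n(\mathbb{P}^{L-1})$ only in the points $[a_i^{\otimes n}]$.

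First I would recall the trisecant lemma in the form needed: for an irreducible nondegenerate variety $Y \subseteq \mathbb{P}^N$ of dimension $d$, and for a generic choice of points $y_1, \ldots, y_R \in Y$ with $R \leq N - d$ (equivalently, so that they span a $(R-1)$-plane disjoint in the expected way), the linear span $\langle y_1, \ldots, y_R \rangle$ intersects $Y$ in exactly $\{y_1, \ldots, y_R\}$ and in no other points. Here $Y = \nu_n(\mathbb{P}^{L-1})$ has $N = \binom{L+n-1}{n} - 1$ and $d = L - 1$, so $N - d = \binom{L+n-1}{n} - L$, which matches the hypothesis $R \leq \binom{L+n-1}{n} - L$ exactly. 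So the statement follows once the trisecant lemma is invoked with the right dimension count. The cited references (the work of Chiantini--Ciliberto, Ballico, and the secant-variety literature in \cite{chiantini2017generic,ballico2005weak,chiantini2002weakly}) or standard sources such as \cite{landsberg2012tensors} provide the lemma; I would cite it and check that its genericity hypotheses are exactly our genericity on the $a_i$.

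The steps in order: (i) translate to projective language and identify $\mathcal{V}_{L,n}$ with the affine cone over the Veronese $\nu_n(\mathbb{P}^{L-1})$, which is irreducible, nondegenerate, and of dimension $L-1$ projectively (so $L$ affinely); (ii) note that for generic $a_i$ the points $[a_i^{\otimes n}]$ are in sufficiently general position that the trisecant/general-position lemma applies — this is where I invoke that the $R$-secant plane has the expected dimension $R-1$, which is exactly the first bullet of Proposition \ref{prop:tdspanV}; (iii) apply the trisecant lemma with $R \leq N - d = \binom{L+n-1}{n} - L$ to conclude $\mathbb{P}(\cA) \cap \nu_n(\mathbb{P}^{L-1}) = \{[a_1^{\otimes n}], \ldots, [a_R^{\otimes n}]\}$; (iv) de-projectivize: a rank-$1$ tensor in $\cA$ is a nonzero scalar multiple of some $a_i^{\otimes n}$, and conversely each $a_i^{\otimes n} \in \cA$ has rank $1$.

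The main obstacle is pinning down the precise hypotheses under which the trisecant lemma yields the statement \emph{for generic points} rather than merely for a general linear section, and confirming that "generic $a_i$" in the sense of our genericity remark is enough. The subtlety is that the trisecant lemma is often stated for a general $(R-1)$-plane, but we need it for the plane spanned by $R$ general points of $Y$; these are not the same, and one must check that general points of the Veronese do span a plane that behaves like a general plane with respect to excess intersection with $Y$. This is exactly the content of the weak-defectivity / identifiability results in \cite{chiantini2002weakly,ballico2005weak}, so I would either cite those directly or give a short dimension-counting argument: the locus of $R$-tuples for which the secant plane meets $Y$ in an extra point is a proper subvariety of $Y^R$ (since its image in the Grassmannian of $(R-1)$-planes avoids a dense open set by the generic trisecant lemma, and the fibers are controlled), hence generic $a_i$ avoid it. Making that incidence-variety argument fully rigorous — rather than hand-waving "generic points behave like a generic plane" — is the part that requires the most care.
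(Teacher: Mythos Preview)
Your proposal is correct and matches the paper's approach almost exactly: both invoke the (generalized) trisecant lemma for the Veronese variety, check the sub-complementary dimension count $\dim(\mathcal{V}_{L,n}) + \dim(\cA) = L + R \leq \binom{L+n-1}{n}$, and conclude that the secant plane meets the Veronese only in the $R$ spanning points. The only difference is that your ``main obstacle'' is a non-issue in the paper's presentation: the version of the lemma cited there (\cite[Prop.~2.6]{chiantini2002weakly} or \cite[Exer.~IV-3.10]{hartshorne2013algebraic}) is already formulated for the span of $R$ \emph{general points on the variety}, not for a general $(R-1)$-plane, so no additional incidence-variety argument is needed.
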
%
\begin{proof}%
	This is a special case of the generalized trisecant lemma in algebraic geometry, see \cite[Prop.~2.6]{chiantini2002weakly} or \cite[Exer.~IV-3.10]{hartshorne2013algebraic}.
	The set of rank $\leq \nobreak 1$ tensors is an irreducible algebraic cone of dimension $\dimL$ linearly spanning its ambient space $\cS_\dimL^n$. 
	It is the affine cone over the Veronese variety, denoted by $[\mathcal{V}_{\dimL}^n]$. 
	Note that $\cA$ is a secant plane through $r$ general points on $[\mathcal{V}_{\dimL}^n]$.  The dimensions of $[\mathcal{V}_{\dimL}^n]$ and $\cA$ are subcomplimentary: 
	\begin{equation}
	\dim([\mathcal{V}_{\dimL}^n]) + \dim(\cA) = \dimL + r \leq \binom{\dimL+n-1}{n} = \dim(\cS_\dimL^n).
	\end{equation}
	Therefore the generalized trisecant lemma applies. 
 It implies that $[\mathcal{V}_{\dimL}^n]$ and $\cA$ have no unexpected intersection points.
 Precisely, %
	$[\mathcal{V}_{\dimL}^n] \cap \cA =  \operatorname{span}\{\a_{1}^{\otimes n} \} \cup \ldots \cup \operatorname{span}\{\a_{r}^{\otimes n} \}$.
\end{proof}

 In \textsc{Power Method}, we seek a rank-1 element in $\cA$ by solving the program:
\begin{equation}\label{eq:SPM-P}
\max_{\Vc{x} \in \mathbb{R}^d} \, F_{\cA}(\Vc{x}) \quad \text{subject to} \,\, \|\Vc{x}\| = 1, 	
\end{equation}
where 
\begin{align} \label{eq:def-FcA}
 F_{\cA}(\Vc{x}) = \|P_\cA(\Vc{x}^{\otimes n})\|^2, 
\end{align}
with  $P_\cA$ the orthogonal projector from  $\cT_\dimL^n$ onto  $\cA$.
The next result justifies \eqref{eq:SPM-P}.
\begin{proposition}\label{prop:program_global_maxima}
For all $\Vc{x}$ with $\| \Vc{x} \|=1$, we have $F_{\cA}(\Vc{x})\le 1$ with equality if and only if $\Vc{x}^{\otimes n}\in \cA$.  
If $r \leq \binom{\dimL+n-1}{n} - \dimL$ and $\cA = \operatorname{span} \{\a_{1}^{\otimes n}, \ldots, \a_{r}^{\otimes n} \}\subset \cS_\dimL^n$ where
$\a_{1}, \ldots, \a_{r}$ are Zariski-generic, then the global maxima of \eqref{eq:SPM-P} are precisely $\pm \a_{1}, \ldots, \pm \a_{r}$ with function value $1$.
\end{proposition}
\begin{proof}
	If $\|\Vc{x}\|=1$, then
	\begin{displaymath}
	\|P_{\cA}(\Vc{x}^{\otimes n})\|^2 \le \|P_{\cA}(\Vc{x}^{\otimes n})\|^2 + \|P_{\cA^{\perp}}(\Vc{x}^{\otimes n})\|^2 = \|\Vc{x}^{\otimes n}\|^2 = 1,
	\end{displaymath}
	where $P_{\cA^{\perp}}$ denotes orthogonal projection onto the orthogonal complement  $\cA^{\perp}$ of $\cA$, thus $F_{\cA}(\Vc{x})\le 1$.
	Moreover, $F_{\cA}(\Vc{x})=1$ if and only if $\|P_{\cA^{\perp}}(\Vc{x}^{\otimes n})\|^2 =0$ if and only if $\Vc{x}^{\otimes n}$ lies in $\cA$. 
 The second sentence is immediate from \cref{prop:subspacerank1generic}.
\end{proof}

 In \textsc{Power Method}, we use projected gradient descent to solve \eqref{eq:SPM-P}.  
We initialize $\Vc{x}$ as a random vector in the unit-sphere $\mathbb{S}^{d-1}$, and iterate 
\begin{equation}\label{eq:powermethodit}
		\Vc{x} \leftarrow \frac{\dotp{P_{\cA}(\Vc{x}^{\otimes n})}{\Vc{x}^{\otimes n-1}} + \gamma \Vc{x}} {\|\dotp{P_{\cA}(\Vc{x}^{\otimes n})}{\Vc{x}^{\otimes n-1}} + \gamma \Vc{x}\|}
\end{equation}
until convergence.  Here $\gamma > 0$ is a fixed constant, whose reciprocal is the step size; according to \cref{thm:meta-converge},  we may set $\gamma > \sqrt{\frac{n-1}{n}}$.
The iteration \eqref{eq:powermethodit} is calculated using $\Mx{U}$ obtained in \textsc{Extract Subspace} (see \cref{prop:tdspanV}).  Specifically if $\U_1, \dots, \U_r$ are the unvectorized columns of $\Mx{U}$, then
\begin{equation}\label{eq:PAdef2-first}
	P_\cA (\x^{\otimes n}) = \sum_{i = 1}^r  \dotpb{\U_i}{ \x^{\otimes n}} \U_i,
\end{equation}
and 
\begin{equation}\label{eq:PAdef3}
	P_\cA (\x^{\otimes n}) \cdot \x^{\otimes n-1}= \sum_{i = 1}^r  \dotpb{\U_i}{ \x^{\otimes n}} \dotp{\U_i}{\x^{\otimes n-1}}.
\end{equation}
Let $\Mx[\bar]{U} = \texttt{reshape}(\Mx{U}, \dimL^{n-1}, \dimL r)$ and $\Mx{W} = \texttt{reshape}(\opvec(\x^{\otimes n - 1})^{\Tr}\Mx[\bar]{U}, \dimL, r)$. If $\inds*[r][\Vc{w}]$ are the columns of $\Mx{W}$, it holds $\Vc{w}_i = \dotp{\U_i}{\x^{\otimes n-1}}$ and $\Vc{w}'_i \x = \dotpb{\U_i}{\x^{\otimes n}}$.  Thus \eqref{eq:PAdef3} is
\begin{displaymath}
P_\cA (\x^{\otimes n}) \cdot \x^{\otimes n-1} = \Mx{W} \Mx{W}' \x.
\end{displaymath}
Suppose now \eqref{eq:powermethodit} 
converges to $\Vc[\bar]{x} \in \R^\dimL$.  
We check if $F_\cA(\Vc[\bar]{x}) = 1$ (up to a tolerance).  
If so, we proceed with the \textsc{Deflate} step below, as $\Vc[\bar]{x}$ is a CP component under the conditions in the last sentence of \cref{prop:program_global_maxima}.
Otherwise, $\Vc[\bar]{x}$ is discarded and \textsc{Power Method} is repeated with a fresh random initialization. In \cref{sec:optimization_landscape}, it is observed that often \textsc{Power Method} converges to a CP component on its first try. 

\begin{remark}
The iteration \eqref{eq:powermethodit} is equivalent to the shifted symmetric higher-order power method of \cite{kolda2011shifted} applied to a certain modified tensor, different from $\TT$.  We explain this in \cref{eq:connection-sshopm}. 
That is why we call this step \textsc{Power Method}.
\end{remark}

\scsubsection*{Deflate}
The last step of SPM is \textsc{Deflate}.  
Given one CP component $\pm \a_i$ from \textsc{Power Method}, it calculates the corresponding coefficient $\pm \lambda_i$.  Then it removes the term $\lambda_i  \a_i^{\otimes m}$ from $\TT$ by appropriately updating the factorization of $\mat(\TT)$.

Assume without loss of generality we obtained $\a_{1}$ from \textsc{Power Method}.   
Define
\begin{align}
	\nonumber \Mx{W}_\tau &= \mat(\TT)- \tau \opvec(\a_{1}^{\otimes n}) \opvec(\a_{1}^{\otimes (m-n)})^{\Tr}
	\\&=(\lambda_1-\tau)\opvec(\a_{1}^{\otimes n}) \opvec(\a_{1}^{\otimes (m-n)})^{\Tr} +  \sum_{i=2}^r \lambda_i \opvec(\a_{i}^{\otimes n}) \opvec(\a_{i}^{\otimes (m-n)})^{\Tr}
	\label{eq:lambdaeq1}
\end{align}
for $\tau \in \mathbb{R}$.
If $\Mx{A}^{\bullet n}$ has full column rank, by \eqref{eq:lambdaeq1} and \cref{prop:tdspanV} it follows that  $\Mx{W}_{\tau}$ has rank $r-1$ if $\tau =\lambda_1$ and rank $r$ otherwise. 
This property determines $\lambda_1$.
A formula for $\lambda_1$ is given by Wedderburn rank reduction %
\cite[Theorem 1.1]{chu1995rank}:
\begin{equation}\label{eq:lambda_pseudoinverse}
\lambda_1 = \frac{1}{\opvec(\a_{1}^{\otimes (m-n)})^{\Tr} \mat(\TT)^{\dagger} \opvec(\a_{1}^{\otimes n})},	
\end{equation}
where $^\dagger$ denotes the Moore-Penrose pseudo-inverse. 
In our implementation, we use formulas for updating $\Mx{U}$ and $\Mx{V}$ directly without recalculating the thin SVD of the deflated flattened tensor:
\begin{displaymath}
(\Mx{U}, \Mx{S}, \Mx{V})\gets \operatorname{svd}(\Mx{W}_{\lambda_1}) = \operatorname{svd}\left(\sum_{i=2}^r \lambda_i \mat (\a_{i}^{\otimes m})\right).
\end{displaymath}
However for efficiency reasons, rather than storing and updating $\Mx{S}$, we store a matrix $\Mx{C}$, which we set initially to $\Mx{S}^{-1}$ and update throughout the algorithm enforcing that $\mat(\tilde \TT) = \Mx{U} \Mx{C}^{-1} \Mx{V}^{\Tr}$, where $\tilde \TT$ is the deflated tensor. In the following proposition we witness that it is more convenient to calculate $\lambda_1$ in terms of $\Mx{C}$, and that the update formulas for the factorization
are favorable.  %
\begin{proposition}\label{prop:deflate}
Let $\Mx{U}\in \R^{\dimL^{n} \times r}$ and $ \Mx{V}\in \R^{\dimL^{m-n} \times r}$ have orthonormal columns, and let $\Mx{C}\in \R^{r\times r}$ be nonsingular such that $\mat(\TT) = \Mx{U} \Mx{C}^{-1} \Mx{V}^{\Tr}$. (Here $\Mx{C}$ is not necessarily diagonal.) 
Suppose, after possibly relabeling and/or flipping sign, we obtain $\Vc{a}_1$ from \textsc{{Power Method}}. Then
\begin{itemize}
	\item We obtain the corresponding coefficient  as
	\begin{equation}
		\lambda_1 = \frac1{{\Vc{\beta}}^{\Tr} \Mx{C} {\Vc{\alpha}}}~~\text{where}~~{\Vc{\alpha}} = \Mx{U}^{\Tr} \opvec(\a_{1}^{\otimes n})~~\text{and}~~{\Vc{\beta}} = \Mx{V}^{\Tr} \opvec(\a_{1}^{\otimes m - n}).
	\end{equation}
	\item Let $\Mx{O}_{\Vc{\alpha}}$, $\Mx{O}_{\Vc{\beta}}$ be $r\times (r-1)$ matrices whose columns form orthonormal bases for $\Span\{\Mx{C} {\Vc{\alpha}}\}^\perp$ and $\Span\{\Mx{C}^{\Tr} {\Vc{\beta}}\}^\perp$, respectively. We update 
	\begin{displaymath}
	(\Mx{U}, \Mx{C}, \Mx{V}) \gets (\Mx[\tilde]{U}, \Mx[\tilde]{C}, \Mx[\tilde]{V}),~~~\text{where}~~\Mx[\tilde]{U} = \Mx{U} \Mx{O}_{\Vc{\beta}},~~  \Mx[\tilde]{V} = \Mx{V} \Mx{O}_{\Vc{\alpha}} ~~\text{and}~~\Mx[\tilde]{C} = \Mx{O}_{\Vc{\alpha}}^{\Tr} \Mx{C} \Mx{O}_{\Vc{\beta}}.
	\end{displaymath}
	The update guarantees that $\Mx[\tilde]{U}$, $ \Mx[\tilde]{V}$ have orthonormal columns, $ \Mx[\tilde]{C}$ is nonsingular and
	$\Mx[\tilde]{U}  \Mx[\tilde]{C}^{-1}  \Mx[\tilde]{V}^{\Tr} = \mat(\TT[\tilde])$, where $\TT[\tilde]$ is the deflated tensor
	\begin{equation}\label{eq:deflated-tensor}
	\TT[\tilde] = \TT - \lambda_1 \sop{\Vc{a}_1} = \sum_{i=2}^r \lambda_i \sop{\Vc{a}_i}.
	\end{equation}
	\item The columns of $\Mx[\tilde]{U}$ give an orthonormal basis for $\Span\{ \a_{2}^{\otimes n}, \ldots, \a_{r}^{\otimes n} \}$.%
\end{itemize}
\end{proposition}

\begin{proof}
The formula for $\lambda_1$ follows from \eqref{eq:lambda_pseudoinverse}:
\begin{equation*}
\lambda_1= \frac{1}{\opvec(\a_{1}^{\otimes (m-n)})^{\Tr} \mat(\TT)^{\dagger} \opvec(\a_{1}^{\otimes n})} = \frac{1}{\opvec(\a_{1}^{\otimes (m-n)})^{\Tr} \Mx{V} \Mx{C} \Mx{U}^{\Tr} \opvec(\a_{1}^{\otimes n})} = \frac{1}{{\Vc{\beta}}^{\Tr} \Mx{C} {\Vc{\alpha}}}
\end{equation*}

To show the remaining bullets, let $\Mx[\tilde]{S} = \Mx{C}^{-1} - \lambda_{1} {\Vc{\alpha}} {\Vc{\beta}}^{\Tr}$.  Then
\begin{align*}
\Mx{U}  \Mx[\tilde]{S} \Mx{V}^{\Tr} &= \Mx{U} \Mx{C}^{-1} \Mx{V}^{\Tr} - \lambda_{1} \Mx{U} \Mx{U}^{\Tr} \opvec(\a_{1}^{\otimes n}) (\Mx{V} \Mx{V}^{\Tr} \opvec(\a_{1}^{\otimes (m-n)}))^{\Tr} \\
&= \mat(\TT) - \lambda_{1} \opvec(\a_{1}^{\otimes n}) \opvec(\a_{1}^{\otimes (m-n)})^{\Tr} = \Mx{W}_{\lambda_{1}},
\end{align*}
where we used $\Mx{U} \Mx{U}^{\Tr} \opvec(\a_{1}^{\otimes n}) = \opvec(\a_{1}^{\otimes n})$, since $\opvec(\a_{1}^{\otimes n}) \in \cA = \texttt{colspan}(\Mx{U})$, and similarly $\Mx{V} \Mx{V}^{\Tr} \opvec(\a_{1}^{\otimes (m-n)}) = \opvec(\a_{1}^{\otimes (m-n)})$. Our proof strategy is to show the following hold: \medskip

\begin{tabular}{cc}
\tageq{eq:deflateeq1}\ $ \Mx[\tilde]{S} = \Mx{O}_{\Vc{\beta}} \Mx{O}_{\Vc{\beta}}^{\Tr} \Mx[\tilde]{S} \Mx{O}_{\Vc{\alpha}} \Mx{O}_{\Vc{\alpha}}^{\Tr}$,	\hspace{2cm} & \tageq{eq:deflateeq2}\ $ \Mx[\tilde]{C} = (\Mx{O}_{\Vc{\beta}}^{\Tr} \Mx[\tilde]{S} \Mx{O}_{\Vc{\alpha}})^{-1}$. %
\end{tabular}\medskip\newline
These will imply $ \Mx[\tilde]{U} \Mx[\tilde]{C}^{-1}  \Mx[\tilde]{V}^{\Tr} = \Mx{U} \Mx{O}_{\Vc{\beta}} \Mx{O}_{\Vc{\beta}}^{\Tr} \Mx[\tilde]{S} \Mx{O}_{\Vc{\alpha}} \Mx{O}_{\Vc{\alpha}}^{\Tr} \Mx{V}^{\Tr} = \Mx{U} \Mx[\tilde]{S} \Mx{V}^{\Tr} = \Mx{W}_{\lambda_{1}}$. Since $\Mx{U}$ and $\Mx{O}_{\Vc{\alpha}}$ have orthonormal columns, $ \Mx[\tilde]{U}=\Mx{U} \Mx{O}_{\Vc{\alpha}}$ also has orthonormal columns (likewise for $ \Mx[\tilde]{V}$), and so the proposition follows.

We now prove \eqref{eq:deflateeq1}. Let $\mathbf{y} = \tfrac{\Mx{C} \Vc{\alpha}}{\|\Mx{C} {\Vc{\alpha}}\|}$, then the definition of $\Mx{O}_{\Vc{\alpha}}$ implies that $\left[\Mx{O}_{\Vc{\alpha}}\; \mathbf{y}\right]$ is a $r\times r$ orthogonal matrix, which implies $\Mx{O}_{\Vc{\alpha}} \Mx{O}_{\Vc{\alpha}}^{\Tr} = \Id - \mathbf{y} \mathbf{y}^{\Tr}$.
Moreover, since $\lambda_{1} =1/({\Vc{\beta}}^{\Tr} \Mx{C} {\Vc{\alpha}})$, we have
\begin{displaymath}
\Mx[\tilde]{S} \mathbf{y} = \frac{1}{\|\Mx{C} {\Vc{\alpha}}\|} (\Mx{C}^{-1} \Mx{C} {\Vc{\alpha}} - \lambda_{1} {\Vc{\alpha}} {\Vc{\beta}}^{\Tr} \Mx{C} {\Vc{\alpha}}) = \frac{1}{\|\Mx{C} {\Vc{\alpha}}\|}({\Vc{\alpha}} - {\Vc{\alpha}}) = 0.
\end{displaymath}
Therefore, $ \Mx[\tilde]{S} \Mx{O}_{\Vc{\alpha}} \Mx{O}_{\Vc{\alpha}}^{\Tr} =  \Mx[\tilde]{S} -  \Mx[\tilde]{S} \mathbf{y} \mathbf{y}^{\Tr} =  \Mx[\tilde]{S}$.  The verification of $ \Mx[\tilde]{S} = \Mx{O}_{\Vc{\beta}} \Mx{O}_{\Vc{\beta}}^{\Tr}  \Mx[\tilde]{S} $ is analogous, and \eqref{eq:deflateeq1} follows. 
Regarding \eqref{eq:deflateeq2}, using again that $ \Mx[\tilde]{S} =  \Mx[\tilde]{S} \Mx{O}_{\Vc{\alpha}} \Mx{O}_{\Vc{\alpha}}^{\Tr}$, with ${\Vc{\beta}}^{\Tr} \Mx{C} \Mx{O}_{\Vc{\beta}} = 0$ which follows from the definition of $\Mx{O}_{\Vc{\beta}}$, we obtain
\begin{align*}
\nonumber (\Mx{O}_{\Vc{\beta}}^{\Tr} \Mx[\tilde]{S} \Mx{O}_{\Vc{\alpha}})  \Mx[\tilde]{C}  &=  \Mx{O}_{\Vc{\beta}}^{\Tr} \Mx[\tilde]{S} \Mx{O}_{\Vc{\alpha}} \Mx{O}_{\Vc{\alpha}}^{\Tr} \Mx{C} \Mx{O}_{\Vc{\beta}}= \Mx{O}_{\Vc{\beta}}^{\Tr} \Mx[\tilde]{S} \Mx{C} \Mx{O}_{\Vc{\beta}},\\
\nonumber &= \Mx{O}_{\Vc{\beta}}^{\Tr}\Mx{C}^{{-}1} \Mx{C} \Mx{O}_{\Vc{\beta}} - \lambda_{1} \Mx{O}_{\Vc{\beta}}^{\Tr} {\Vc{\alpha}} {\Vc{\beta}}^{\Tr} \Mx{C} \Mx{O}_{\Vc{\beta}} = \Mx{O}_{\Vc{\beta}}^{\Tr} \Mx{O}_{\Vc{\beta}} = \Id.
\end{align*}
\end{proof}

In light of the proposition, \textsc{Deflate} proceeds as follows. 
Set ${\Vc{\alpha}} \gets \Mx{U}' \opvec(\a_{i}^{\otimes n})$, ${\Vc{\beta}} \gets  \Mx{V}' \opvec(\a_{i}^{\otimes m - n})$ and  $\lambda_i = 1 / {\Vc{\beta}}^{\Tr} \Mx{C} {\Vc{\alpha}}$, where $\Mx{C} = \Mx{S}^{-1}$.  We then calculate $\Mx{O}_{\Vc{\alpha}}$ and $\Mx{O}_{\Vc{\beta}}$, and update $(\Mx{U},\Mx{C},\Mx{V}) \gets (\Mx{U} \Mx{O}_{\Vc{\beta}}, \Mx{O}'_{\Vc{\alpha}} \Mx{C} \Mx{O}"_{\Vc{\beta}}, \Mx{V} \Mx{O}"_{\Vc{\alpha}})$.

By design, the procedure enjoys two nice properties.
Firstly, the columns of $\Mx[\tilde]{U}$ give an orthonormal basis of $\tilde{\cA} = \Span \{\a_{2}^{\otimes n}, \ldots, \a_{r}^{\otimes n}\}$. Thus, we can use $\Mx[\tilde]{U}$ for the next run of \textsc{Power Method}.
Secondly, the orthogonal matrices $\Mx{O}_{\Vc{\alpha}}$ and $\Mx{O}_{\Vc{\beta}}$ can be constructed efficiently using Householder reflections.  
We explain the implementation for $\Mx{O}_{\Vc{\alpha}}$; $\Mx{O}_{\Vc{\beta}}$ is implemented analogously.
	Set $\y= \tfrac{\Mx{C} \Vc{\alpha}}{\|\Mx{C} \Vc{\alpha}\|}$ and define $\Vc{z}\in \R^r$ by $z_r =   \sqrt{1 + |y_r|}$ and $z_i = \operatorname{sign}(y_r) y_i / z_r$ for $i=1,\dots,r-1$.
	It is easily checked $\|\mathbf{z}\|^2 = 2$, therefore the matrix $\Mx{H} = \Id - \Vc{z} \Vc{z}'$ is a Householder reflection, and the last column $\Mx{H}$ is $\operatorname{sign}(y_r) \y$.
 We pick $\Mx{O}_{\Vc{\alpha}}$ to be the first $r-1$ columns of $\Mx{H}$, which form an orthonormal basis for $\Span\{\y\}^\perp = \Span\{\Mx{C} {\Vc{\alpha}}\}^\perp$.
 Using Householder reflections is more efficient than explicitly forming $\Mx{O}_{\Vc{\alpha}}$, because we can exploit that $\Mx{O}_{\Vc{\alpha}}$ is a rank-1 update of the identity matrix to calculate the matrix product $\Mx{V} \Mx{O}_{\Vc{\alpha}}$ in $\mathcal{O}(\dimL^{m- n} r)$ time. It gives a speed-up compared to calculating this product naively, 
 which takes $\mathcal{O}(\dimL^{m- n} r^2)$ time.

\subsection{Full algorithm}

\textsc{Power Method} and \textsc{Deflate} repeat as subroutines, such that each CP component $(\lambda_i, \a_{i})$ is removed one at a time, until all components have been found.  %
The full Subspace Power Method is detailed in \cref{alg:estd} below.

\begin{algorithm}[ht]
	\caption{Subspace Power Method (SPM)}
	\label{alg:estd}
	\begin{algorithmic}%
		\Require generic $\TT \in \cS_\dimL^{m}$ of rank $r$ satisfying \eqref{eq:rank_formula} \\
		\text{Hyperparameters:} $\kappa>0$, $\zeta> 0$, $1 \le n<m$, $\gamma> \sqrt{\frac{n-1}{n}}$
		\Ensure rank $r$ and tensor decomposition $\{(\lambda_i, \a_{i})\}_{i=1}^r$
		\Section{Extract Subspace}{\linewidth}
		\State $(\Mx{U},\Mx{S},\Mx{V}) \gets \operatorname{svd}(\mat(\TT)) $ 
		\State $\Mx{C} \gets \Mx{S}^{-1}$
		\State $r \leftarrow \rank(\mat(\TT))$
		\EndSection
		\For{$i=1$ \textbf{to} $r$}
		\State $\Mx[\bar]{U} \gets \texttt{reshape}(\Mx{U}, \dimL^{n-1}, \dimL r)$
		\Section{Power Method}{\linewidth-\algorithmicindent}
		\State $\Vc{x} \gets \operatorname{random}(\mathbb{S}^{\dimL-1})$
		\Repeat
		\State $\Vc[\tilde]{x}\gets \Vc{x}$ 
		\State $\Mx{W} \gets \texttt{reshape}(\opvec(\x^{\otimes n - 1})^{\Tr}\Mx[\bar]{U}, \dimL, r)$\smallskip
		\State $\Vc{x} \gets \displaystyle \frac{\Mx{W} \Mx{W}' \x + \gamma \Vc{x}}{\|\Mx{W} \Mx{W}' \x + \gamma \Vc{x}\|}$
		\Until{$\|\Vc{x} - \Vc[\tilde]{x}\| < \kappa$}
		\If{$F_\cA(\Vc{x})> \zeta$} $\a_{i} \gets \Vc{x}$
		\Else ~repeat \textsc{Power Method}\EndIf \vspace{-\baselineskip}\EndSection \Section{Deflate}{\linewidth-\algorithmicindent}
	    \State ${\Vc{\alpha}} \gets \Mx{U}^{\Tr} \opvec(\a_{i}^{\otimes n})$, ${\Vc{\beta}} \gets  \Mx{V}^{\Tr} \opvec(\a_{i}^{\otimes m - n})$
	    \State $\lambda_i \gets  \Nrm{\Vc{\beta}}\Nrm{\Vc{\alpha}}  / ({\Vc{\beta}}^{\Tr} \Mx{C} {\Vc{\alpha}})$
	    \State $\Mx{O}_{\Vc{\alpha}} \gets$ orthonormal basis of $\Span\{\Mx{C} {\Vc{\alpha}}\}^\perp$
	    \State $\Mx{O}_{\Vc{\beta}} \gets$ orthonormal basis of $\Span\{\Mx{C}^{\Tr} {\Vc{\beta}}\}^\perp$
	    \State $(\Mx{U},\Mx{C},\Mx{V}) \gets (\Mx{U} \Mx{O}_{\Vc{\beta}}, \Mx{O}_{\Vc{\alpha}}^{\Tr} \Mx{C} \Mx{O}_{\Vc{\beta}}, \Mx{V} \Mx{O}_{\Vc{\alpha}})$
		\EndSection
		\EndFor
		\Return $r$ and $\{(\lambda_i, \a_{i})\}_{i=1}^r$
	\end{algorithmic}
\end{algorithm}

\subsection{Practical considerations}
\label{sec:practical}

\subsubsection{Computational and storage costs}
\label{sec:comp_and_store_costs}
The computational costs of \cref{alg:estd} are as follows.  
\textsc{Extract Subspace} computes $\operatorname{svd}(\mat(\TT))$ upfront in $\mathcal{O}(\dimL^{m+k})$, where $k=\min(n, m-n)$. If an upper bound for the rank $\tilde r\ge r$ is known a priori, this drops to $\mathcal{O}(\dimL^{m} \tilde r)$ (e.g., using randomized linear algebra). %
Suppose $s \leq r$ components $(\lambda_i, \a_{i})$ are yet to be found.  
In \textsc{Power Method}, %
 each iteration of \eqref{eq:powermethodit} costs $\mathcal{O}(s \dimL^n)$, the price of applying $P_{\cA}$.  
In \textsc{Deflate}, computing $\Vc{\alpha}$ and $\Vc{\beta}$ cost $\mathcal{O}(s \dimL^n)$ and $\mathcal{O}(s \dimL^{m-n})$ respectively, computing $\lambda_i$ is $\mathcal{O}(s^2)$ and updating $(\Mx{U}, \Mx{C}, \Mx{V})$ is $\mathcal{O}(s (\dimL^{n} + \dimL^{m-n} + s))$.
The storage costs are $\mathcal{O}(s(\dimL^{m-n}+\dimL^{n} + s))$, corresponding to storing the matrix factorization of $\mat(\TT)$. The storage of this matrix factorization dominates the other storage costs.

\subsubsection{Maximal rank}\label{sec:rank}
If the CP components are Zariski-generic, SPM can work up to the ranks in \cref{prop:tdspanV,prop:subspacerank1generic}. 
These require $r\leq \binom{\dimL+p-1}{p}$ (where $p=\min(n, m-n))$, and $r\leq \binom{\dimL+n-1}{n}-\dimL$, respectively. Together, the conditions imply
\begin{equation}\label{eq:rank_formula}
r \, \leq \, \binom{\dimL+ p-1}{p} \, - \, \delta_{p=n} \, \dimL,
\end{equation}
where $\delta_{p=n} = 1$ if $p=n$ and $\delta_{p=n} = 0$ otherwise. The maximal rank is obtained when $n=\lceil m/2 \rceil$ and $p=\lfloor m/2 \rfloor$.
The formulae for the maximal tensor rank for the first few tensor orders are
\begin{equation}\label{eq:rank_formula_examples}
\begin{array}{c l m{20pt} c l}
m=3: & r\le d & & m=4: & r\le \frac12 d(d-1)\medskip\\
m=5: & r\le \frac12 d(d+1) & & m=6: & r\le \frac16 d(d^2+3d-4).
\end{array}
\end{equation}

\subsubsection{Eigendecomposition}
\label{rem:even}
	When $m$ is even and $n= m/2$,  $\mat(\TT)$ is a symmetric matrix. 
Then an eigendecomposition algorithm may be used in place of SVD in \textsc{Extract Subspace}. Furthermore, a symmetric variant of \cref{prop:deflate} holds where  $\Vc{\alpha}=\Vc{\beta}$, $\Mx{U} = \Mx{V}$ and $\Mx{C}$ is symmetric.

\subsubsection{Unique rows and columns}
Since $\TT$ is a symmetric tensor, $\mat(\TT)$ has many repeated rows and columns. 
While the total number of rows is $d^n$, it only has $\binom{d+n-1}{n}$ unique rows corresponding to $[d]^n$ up to the action of $\Pi^n$.
Similarly, it has only $\binom{d+m-n-1}{m-n}$ unique columns.
We calculate the SVD only on the subset of unique rows and unique columns. We rescale each of the unique rows by the square root of the number of times it appears. 
This preserves the dot-product between the columns and rows, so the SVD of $\mat(\TT)$ is recovered from the SVD of the submatrix.
The submatrix has approximately $\frac1{n!}$ of the rows and $\frac1{(m-n)!}$ of the columns of the full matrix, so this speeds up \textsc{Extract Subspace} by about a factor of $n!((m-n)!)^2$ (assuming $n\le m - n$, which is often the case).%

\subsubsection{Approximately low-rank tensors}

We explain how to tweak SPM to deal with approximately low-rank tensors, which often arise in applications. 

First, for approximately low-rank tensors, the flattening is not exactly low-rank. Nevertheless, it is an approximately low-rank matrix, therefore we may use SVD to obtain the best low-rank approximation, and select the rank using explained variance or other PCA techniques.

As for the \textsc{Power Method} routine, the noise leads to tensor components having a function value less than $1$. Nevertheless, in practice we see that the function value of the maxima are still close to $1$, and therefore we add an hyperparameter $\zeta$ (close to $1$) such that if the \textsc{Power Method} converges to a point with function value $>\zeta$, we accept it as a tensor component.  If an estimate of the noise of the tensor is available, it may be used to inform the choice of $\zeta$; see \cite{kileel2021landscape} for  theoretical analysis related to this.

Finally, regarding \textsc{Deflation}, we note that for exactly low rank tensors, we have that $\opvec(\a_{i}^{\otimes n})\in \colspan(\Mx{U})$, which implies $\Nrm{\Vc\alpha}=1$. For dealing with approximately low-rank tensors where $\Nrm{\Vc\alpha}<1$, we modify the deflation formula to:
\begin{equation}
\lambda_i \gets  \frac{\Nrm{\Vc{\beta}}\Nrm{\Vc{\alpha}}}{{\Vc{\beta}}^{\Tr} \Mx{C} {\Vc{\alpha}}}.
\end{equation}

\subsubsection{Hyperparameters}

The implementation of SPM uses hyperparameters:
\begin{itemize}
\item $\kappa$, the distance between \textsc{Power Method} iterates below which we declare \textsc{Power Method} has converged (\textit{default $= 1 \cdot 10^{-14}$});
\item $\zeta$, the minimum function value for $F_{\mathcal{A}}$ for which we accept the \textsc{Power Method} as having converged to a CP component (\textit{default }$= 0.99$); %
\item the maximum number of \textsc{Power Method} iterations (\textit{default }$= 5000$); 
\item the maximum number of repetitions of the overall \textsc{Power Method} step, if the function values are below $\zeta$, after which we pick the iterate with the biggest function value (\textit{default }$= 3$).%
\end{itemize}


\section{Power Method Analysis}\label{sec:tdpowermethod}

The object of this section is to 
prove the following convergence guarantee for \textsc{Power Method}, which is the only part of SPM using nonconvex optimization.  %
\begin{theorem}\label{thm:power}
Assume $m\ge 3$ and let $\TT \in \cS_\dimL^{m}$ satisfy a CP decomposition 
\eqref{eq:tensordecdef}, where $\lambda_i \in \R$ and  $\a_{i} \in \mathbb{S}^{\dimL-1}$ for $i = 1, \ldots, r$.
Let $\cA = \Span\{ \a_{1}^{\otimes n}, \ldots, \a_{r}^{\otimes n} \} \, \subset \, \cS_\dimL^n$ as in \eqref{eq:def_cA}, where $n = \lceil \frac{m}{2} \rceil$.
Set $F_{\cA}(\x) = \| P_{\cA}(\x^{\otimes n})\|^2$ as in \eqref{eq:def-FcA}, and consider the constrained 
optimization problem:
\begin{equation} \label{eq:optim}
\max_{\x\in \mathbb{S}^{\dimL-1}} \,\,\, F_{\cA}(\x).%
\end{equation}
Following \textsc{Power Method}, define the sequence:
\begin{equation} \label{eq:iter}
\x_{k+1} = \frac{\dotp{P_{\cA}(\x_k^{\otimes n})}{\x_k^{\otimes(n-1)}} + \gamma \x_k} {\|\dotp{P_{\cA}(\x_k^{\otimes n})}{\x_k^{\otimes(n-1)}} + \gamma \x_k\|}.
\end{equation}

Here $\x_1 \in \mathbb{S}^{\dimL-1}$ is an initialization, and $\gamma \in \R_{> 0}$ is a fixed shift such that $F_{\cA}(\x) + \gamma (\x^{\Tr} \x)^n$ is a strictly convex function on $\R^n$.  For example, $\gamma >  \sqrt{\frac{n-1}{n}}$ is a sufficiently large shift. Then
\begin{itemize}\setlength\itemsep{0.45em}
\item For all initializations $\x_1$,  \eqref{eq:iter} is well-defined and converges monotonically to a first-order critical point $\x_*$ of \eqref{eq:optim} at no less than an algebraic rate.
That is, $F_{\cA}(\x_{k+1}) \geq F_{\cA}(\x_k)$ for all $k$, and there exist constants $\tau = \tau(\mathcal{A}, \gamma, \x_1) > 1$ and $C = C(\mathcal{A}, \gamma, \x_1) > 0$ such that $\|\x_k - \x_*\| \leq C k^{- \tau}$ for  all $k$. 
\item For a full Lebesgue-measure subset of initializations $\x_1$, \eqref{eq:iter} converges to a second-order critical point of \eqref{eq:optim}.
\item If $r \leq \binom{\dimL+n-1}{n} - \dimL$ and $\a_1, \ldots, \a_r$ are Zariski-generic, the global maxima of \eqref{eq:optim} are precisely $\pm \a_{i}$.  If $r \leq \binom{\dimL+n-1}{n} - \dimL + 1$ and $\a_1, \ldots, \a_r$ are Zariski-generic, each $\pm a_i$ is locally attractive: for all initializations $\x_1$ sufficiently close to $\pm \a_{i}$, \eqref{eq:iter} converges to $\pm \a_{i}$ at no less than an exponential rate.  That is, there exist positive constants $\delta = \delta(\mathcal{A}, \gamma, i)$, 
$\tau = \tau(\mathcal{A}, \gamma, i)$ and $C = C(\mathcal{A}, \gamma, i)$ such that $\| \x_1 - \pm \a_{i} \| \leq \delta$  implies $\| \x_k  - \pm \a_{i}\| \leq C e^{-k\tau}$ for all $k$.
\end{itemize}
\end{theorem}

\begin{remark}
It is easy to see that if $\gamma > 0$ then the \textsc{Power Method} sequence \eqref{eq:iter} is well-defined.
The denominator in  \eqref{eq:iter} does not vanish, because 
$\langle P_{\cA}(\x_k^{\otimes n}) \cdot \x_k^{\otimes (n-1)} + \gamma \x_k, \x_k \rangle = \| P_{\cA}(\x_k^{\otimes n}) \|^2  \, + \, \gamma > 0$.
\end{remark}

\begin{remark} \label{rem:lagrange}
In \cref{thm:power}, critical points are understood in the usual sense of manifold optimization.  That is, $\x_* \in \mathbb{S}^{d-1}$ is first-order critical if 
$\rgrad F_{\cA}(\x_*) = 0$, 
and it is second-order critical if in addition
$\rhess F_{\cA}(\x_*) \preceq 0$,
where grad and Hess denote the \textit{Riemannian gradient} and \textit{Riemannian Hessian} on $\mathbb{S}^{d-1}$ respectively (see \cite[Prop.~4.6]{boumal2023introduction} and \cite[Prop.~6.3]{boumal2023introduction}).  More concretely, from \cite[Sec.~4.2]{absil2013extrinsic} it holds
\begin{align}
&\rgrad F_{\cA}(\x_*) = (\Id - \x_* \x_*^{\Tr}) \nabla F_{\cA}(\x_*), \label{eq:def-riem-grad} \\  &\rhess F_{\cA}(\x_*)  = (\Id - \x_* \x_*^{\Tr}) \nabla^2 F_{\cA}(\x_*) (\Id - \x_* \x_*^{\Tr}) - (\x_*^{\Tr} \nabla F_{\cA}(\x_*)) (\Id - \x_* \x_*^{\Tr}),  \label{eq:def-riem-hess}
\end{align}
where $\nabla$ and $\nabla^2$ denote the Euclidean gradient and Hessian respectively.
\end{remark}

The proof of \cref{thm:power} spans the remainder of \cref{sec:tdpowermethod}, with  details appearing in the appendices.  
As an outline, we identify the iteration \eqref{eq:iter} with SS-HOPM \cite{kolda2011shifted} for computing tensor Z-eigenvectors, except that now SS-HOPM is 
applied to a certain modification of the tensor $\TT$ denoted $\TT[\widetilde]$.
Then we prove the first two bullets of \cref{thm:power} by a new-and-improved general analysis of SS-HOPM. %
For the third bullet we linearize  \eqref{eq:iter} and make a geometric argument exploiting properties of $\TT[\widetilde]$.

\subsection{Connection with SS-HOPM} \label{eq:connection-sshopm}
In \cref{alg:estd}, let $\U_1,\dots,\U_r \in \cS_\dimL^n$ be the orthonormal basis of $\cA \subset \cS_\dimL^n$ given by the columns of $\Mx{U}$. 
\Cref{eq:PAdef2-first} implies 
\begin{equation*}\label{eq:PAdef2}
F_{\cA}(\x) = \|P_\cA (\x^{\otimes n})\|^2 = \Ang{P_\cA (\x^{\otimes n}), \x^{\otimes n} } = \sum_{i = 1}^r  \dotpb{\U_i}{ \x^{\otimes n}}^2.
\end{equation*}
We define the even-order tensor
\begin{equation} \label{eq:Tmodify}
\TT[\widetilde] = \sum_{i = 1}^r  \Sym(\U_i \otimes \U_i) \in \cS_\dimL^{2n}.
\end{equation}
Notice that by \cref{lemma:inner_tensor_powers,lemma:sym_orthogonal_projection}, 
\begin{equation*}\label{eq:PAdef4}
	\langle \TT[\widetilde], \sop[2n]{\x} \rangle \,\,=\,\, \sum_{i = 1}^r  \dotpb{\U_i}{ x^{\otimes n}}^2 = F_{\cA}(\x),
\end{equation*}
and
\begin{equation*}\label{eq:PAdef3a}
\TT[\widetilde]\cdot \x^{\otimes (2n-1)} \,\,=\,\, \frac{1}{2n}\nabla F_{\cA}(\x) = \sum_{i = 1}^r  \dotpb{\U_i}{ \x^{\otimes n}}\U_i \cdot \x^{\otimes (n-1)} \,\,=\,\,  \dotp{P_\cA(\x^{\otimes n})}{\x^{\otimes (n-1)}}.
\end{equation*}
It follows that the \textsc{Power Method} iterations \eqref{eq:powermethodit} and \eqref{eq:iter} for $\TT$ coincide with SS-HOPM iterations applied to the tensor $\TT[\widetilde]$, with shift $\gamma$. 
Moreover, from a tensor standpoint such iterations make sense:  \cref{prop:program_global_maxima} implies that the CP components $\a_i$ of $\TT$ are Z-eigenvectors of $\TT[\widetilde]$, and SS-HOPM computes Z-eigenvectors.

In  \cite{kolda2011shifted} on SS-HOPM, the shift  is chosen so  the corresponding homogeneous polynomial becomes a convex function on $\mathbb{R}^d$.  Here $\TT[\widetilde]$ corresponds to  the polynomial $F_{\cA}(\x)$, and to $F_{\cA}(\x) + \gamma (\x^{\Tr} \x)^{n}$ with the shift. So the next lemma lets us choose $\gamma$.

\begin{lemma} \label{lem:shift}
	Let $\cA \subset \cS_\dimL^n$ be any linear subspace (spanned by rank-1 points or not).
 Let
	$P_{\cA} : \cS_\dimL^n \rightarrow \mathcal{A}$ be orthogonal projection onto $\cA$, $F_{\cA}(\x) = \| P_{\cA}(\x^{\otimes n}) \|^2$ and $\nu\in [0,1]$.
	If $\|\x\| = 1$ and $F_{\cA}(\x)\ge \nu$, then 
	\begin{equation} \label{eq:hessian_psd}
		\frac1{2n}\,\, \min_{\y\in \mathbb{S}^{\dimL-1}}\,\,  \y^{\Tr}\, \nabla^2 F_{\cA}(\x) \,\y \,\, \ge \,\, - \, \sqrt{\frac{n-1}{n}}h(\nu),
	\end{equation}
	where
	\begin{equation}\label{eq:gamma_k}
		h(\nu)=\begin{cases}
			1-\frac{\nu}{2}&\text{if }\nu \le \frac{2}{3}\\
			\sqrt{2\nu(1-\nu)}&\text{if }\nu > \frac{2}{3}.
		\end{cases}
	\end{equation}
	In particular, $F_{\cA}(\x) + {\gamma} (\x^{\Tr} \x)^{n}$ is strictly convex on $\R^{\dimL}$ 
	whenever $\gamma > \sqrt{\frac{n-1}{n}}$.
\end{lemma}

\Cref{lem:shift} is proven in \cref{app:proofshift} by a lengthy but direct calculation.  %

\subsection{Global convergence of SS-HOPM} \label{subsec:sharpen}
Here we sharpen the analysis of SS-HOPM in general.  
In this subsection only,
$F(\x)$ stands for \textit{any} homogeneous polynomial 
function on $\R^\dimL$ of degree $2n$, that is, not necessarily of the form $F_{\cA}(\x)$ for some subspace $\cA$.  It corresponds to an \textit{arbitrary} symmetric tensor  $\TT[\widetilde] \in \cS^{2n}_d$ through $F(\x) = \langle \TT[\widetilde], \x^{\otimes 2n}\rangle$, rather than  $\TT[\widetilde]$ as in \eqref{eq:Tmodify}.
We consider the optimization problem
\begin{equation} \label{eq:maxf}
\max_{\x \in \mathbb{S}^{\dimL-1}} \, F(\x),
\end{equation}
whose critical points are the Z-eigenvectors of $\TT$.  Like in \eqref{eq:iter}, SS-HOPM follows the sequence 
\begin{equation} \label{eq:iter1}
\x_{k+1} = \frac{ \tfrac{1}{2n} \nabla F(\x_k)  +  \gamma  \x_k} {\| \tfrac{1}{2n} \nabla F(\x_k)  +  \gamma  \x_k \|},
\end{equation}
where $\x_1 \in \mathbb{S}^{\dimL-1}$ is an initialization and $\gamma \in \R$ is a fixed shift.  Assume $\gamma$ is chosen so
\begin{equation}\label{eq:shift-convex-1}
G(\x) = F(\x) + {\gamma} (\x^{\Tr} \x)^{n} \,\, \text{is a strictly convex function on } \mathbb{R}^d.
\end{equation}
For $\x \in \mathbb{S}^{d-1}$ denote
\begin{equation} \label{eqn:G}
\Psi(\x) = \frac{\nabla G(\x)}{\| \nabla G(\x) \|},
\end{equation}
so that \eqref{eq:iter1} may be written 
\begin{equation} \label{eq:max_F}
\x_{k+1} = \Psi(\x_k).
\end{equation}
Note that $\x_* \in \mathbb{S}^{d-1}$ is first-order critical for \eqref{eq:maxf} if and only if $\x_*$ is a fixed point \nolinebreak of \nolinebreak $\Psi$. %

In the next result we resolve Kolda-Mayo's conjecture \cite[p.~1107]{kolda2011shifted} for even-order tensors, corresponding to even-degree homogeneous polynomials, by establishing that SS-HOPM converges for all initializations.  
A main tool comes from \L ojasiewicz's  inequality for real analytic functions \cite{lojasiewicz1965ensembles}.

\begin{theorem}[Unconditional convergence of SS-HOPM] \label{thm:always_converge}
Assume the setting of \eqref{eq:maxf}-\eqref{eq:max_F}. 
Then for all initializations $\x_1 \in \mathbb{S}^{d-1}$, the sequence \eqref{eq:iter1} is well-defined and \eqref{eq:iter1} converges monotonically to a first-order critical point of \eqref{eq:maxf} at no less than an algebraic rate.
\end{theorem}
\begin{proof}
Note the denominators in \eqref{eq:iter1} do not vanish, %
as $\nabla G(\x_k) = \nabla F(\x_k) + 2n \gamma (\x_k^{\Tr} \x_k)^{n-1} \x_k =  \nabla F(\x_k) + 2n\gamma \x_k$, so $\langle \x_k, \tfrac{1}{2n} \nabla F(\x_k) + \gamma \x_k \rangle = \langle \x_k, \tfrac{1}{2n} \nabla  G(\x_k) \rangle = \tfrac{1}{2n} G(\x_k)$ which is positive since $G$ is even and strictly convex. %
It follows that \eqref{eq:iter1} is well-defined.
Also clearly the constrained critical points of $F$ and $G$ coincide, because $G(\x) = F(\x) + \gamma$ for $\x \in \mathbb{S}^{d-1}$, so we may show $(\x_{k})_{k=1}^{\infty}$ 
converges monotonically at (at least) a power rate to a first-order critical point of \begin{equation} \label{eq:maxf2}
\max_{\x \in \mathbb{S}^{\dimL-1}} \, G(\x),
\end{equation}.  

By convexity of $G$,
\begin{equation} \label{eq:G-convexity}
G(\x_{k+1}) - G(\x_k) \geq \nabla G(\x_k)^{\Tr} (\x_{k+1} - \x_k).
\end{equation}
From \eqref{eq:max_F} and the Cauchy-Schwarz inequality, 
\begin{equation*}
\nabla G(\x_k)^{\Tr} (\x_{k+1} - \x_k) = \| \nabla G(\x_k)\| - \nabla G(\x_k)^{\Tr} \x_k \geq \| \nabla G(\x_k)\| - \|\nabla G(\x_k)\| \|\x_k\| = 0.
\end{equation*}
Thus $(G(\x_k))_{k=1}^{\infty}$ monotonically increases.

Next suppose $(\x_k)_{k=1}^{\infty}$ indeed converges to $\x_* \in \mathbb{S}^{d-1}$.
Taking limits in \eqref{eq:max_F}, continuity of $G$ implies
\begin{equation*}
\x_* = \frac{\nabla G(\x_*)}{\| \nabla G(\x_*) \|}.
\end{equation*}
In particular, 
$(\Id - \x_* \x_*^{\Tr}) \nabla G(\x_*) = 0$, and so $\x_*$ is a first-order critical point of \eqref{eq:maxf2}  (see \cref{rem:lagrange}).

It remains to show that $(\x_k)_{k=1}^{\infty}$ actually converges, and that it does so at at least a power rate.
To this end we apply a convergence result of Schneider and Uschmajew \cite[Theorem~2.3]{schneider2015convergence} based on the \L ojasiewicz inequality for real analytic functions, see 
a precise statement in \cref{app:convergence-result}.
We take $\mathcal{M}$ in \cref{thm:meta-converge} to be $\mathbb{S}^{d-1} \subset \mathbb{R}^d$.

To verify condition (\textbf{A1}) in \cref{thm:meta-converge}, we must verify there exists $\sigma > 0$ such that for large enough $k$, 
\begin{equation} \label{eq:firstCond}
G(\x_{k+1}) - G(\x_{k}) \geq \sigma \|\textup{grad } G(\x_k) \| \|\x_{k+1} - \x_k \|,
    \end{equation}
    where $\textup{grad } G(\x_k) = (\Id - \x_k\x_k^{\Tr})\nabla G(\x_k)$ denotes the Riemannian gradient. 
    In fact, $\sigma = \frac{1}{2}$ works.
    Indeed from \eqref{eq:G-convexity} and \eqref{eq:max_F},  
\begin{align}\setlength\itemsep{0.2em} \label{eq:loj}
G(\x_{k+1})-G(\x_k) \,
&\ge \, \nabla G(\x_k)^{\Tr}(\x_{k+1}-\x_k) 
\, = \, \|\nabla G(\x_k)\| \x_{k+1}^{\Tr}(\x_{k+1}-\x_k) \nonumber \\
 & = \, \|\nabla G(\x_k)\| (1-\left<\x_{k+1},\x_k\right>)
\, = \, \tfrac{1}{2}\|\nabla G(\x_k)\| \|\x_{k+1}-\x_k\|^2.
\end{align}
On the other hand,
\begin{align}\setlength\itemsep{0.2em}
&\|
\rgrad  G(\x_k)\|^2 
\, = \, \|(\Id - \x_k\x_k^{\Tr})\nabla G(\x_k)\|^2 
\, = \, \|\nabla G(\x_k)\|^2\|\x_{k+1}-\left<\x_{k+1},\x_k\right>\x_k \|^2 \nonumber \\
&\, = \, \|\nabla G(\x_k)\|^2 (1-\left<\x_{k+1},\x_k\right>^2) 
\, = \, \|\nabla G(\x_k)\|^2 (1-\left<\x_{k+1},\x_k\right>)(1+\left<\x_{k+1},\x_k\right>) \nonumber \\
& \, \le \, 2 \|\nabla G(\x_k)\|^2 (1-\left<\x_{k+1},\x_k\right>) 
\, = \, \|\nabla G(\x_k)\|^2 \|\x_{k+1}-\x_k\|^2. \nonumber
\end{align}
Substituting the square root of this into \eqref{eq:loj} yields \eqref{eq:firstCond} with $\sigma = \tfrac{1}{2}$.

 To check condition (\textbf{A2}) in \cref{thm:meta-converge}, it is required to verify  if $k$ is large enough then 
  $
\rgrad  G(\x_k) = 0$ implies  $\x_{k+1} = \x_k$.
Assume $
\rgrad  G(\x_k) = 0$.  Since $
\rgrad  G(\x_k) = (\Id - \x_k\x_k^{\Tr})\nabla G(\x_k)$, it follows $\nabla G(\x_k)$ is parallel to $\x_k$.
As explained in the first sentence of the proof, $\langle  \x_k, \tfrac{1}{2n} \nabla G(\x_k)\rangle > 0$.  So, $\nabla G(\x_k)$ is a positive multiple of $\x_{k+1}$.  By \eqref{eq:max_F}, $\x_{k+1} = \x_k$.

For condition (\textbf{A3}) in \cref{thm:meta-converge}, we must verify there exists a constant $\rho > 0$ such that for large enough $k$ it holds
$\|\x_{k+1} - \x_{k} \| \geq \rho \|\textup{grad } G(\x_k) \|$.
However by the above, we may take $\rho = \left( \max_{\| \x \| = 1} \| \nabla G(\x)  \| \right)^{-1}$.

\Cref{thm:meta-converge} implies the sequence converges at at least an algebraic rate.  
\end{proof}

Next we prove that SS-HOPM converges to second-order critical points of \eqref{eq:maxf} for almost all initializations.  In the language of \cite{kolda2011shifted}, SS-HOPM converges to {stable eigenvectors}. 
We adopt the proof strategy of \cite{panageas2016gradient, lee2016gradient} based on the center-stable manifold theorem from dynamical systems.  The following lemma is a key calculation.
\begin{lemma} \label{lem:calc}
Assume the setting of \cref{eq:maxf,eq:iter1,eq:shift-convex-1,eqn:G,eq:max_F}. 
For all $\x \in \mathbb{S}^{d-1}$ the Jacobian $D \Psi (\x)$ as a linear map between tangent spaces to the sphere is
\begin{equation} \label{eq:jac-generalx}
D \Psi (\x) \, = \, (\Id - \Psi(\x) \Psi(\x)^{\Tr}) \frac{\nabla^2 G(\x)}{\| \nabla G(\x) \|} (\Id - \x \x^{\Tr}),
\end{equation}
where $\nabla^2 G(\x)$ denotes the Euclidean Hessian.  At a first-order critical point $\x_* \in \mathbb{S}^{\dimL-1}$ of \eqref{eq:maxf} it holds 
\begin{equation} \label{eq:jac-firstx}
D\Psi(\x_*) \,\, = \,\, \frac{\rhess F(\x_*)}{2n (F(\x_*) + \gamma)} \,  + \, \, \Id - \x_* \x_*^{\Tr},
\end{equation}
where $\rhess$ denotes the Riemannian Hessian.
\end{lemma}

The proof of \cref{lem:calc} is given in \cref{app:proofcalc}. It implies two additional lemmas, recorded next.

\begin{lemma}\label{eq:local-diffeo-2}
Assume the setting of \cref{eq:maxf,eq:iter1,eq:shift-convex-1,eqn:G,eq:max_F}. 
Then $\Psi$ is a local diffeomorphism from $\mathbb{S}^{d-1}$ to $\mathbb{S}^{d-1}$.
\end{lemma}
\begin{proof}
It suffices to show that $D\Psi(\mathbf{x})$ is an isomorphism on tangent spaces for all $\x \in \mathbb{S}^{d-1}$.  From \eqref{eq:jac-generalx} we need to show that if $\langle \y, \x \rangle = 0$ and $\y \neq 0$ then $\nabla^2 G(\x) \y$ is not parallel to $\Psi(\x)$.  But this follows from the facts that $\nabla^2 G(\x)$ is nonsingular (since $G$ is strictly convex), and $\nabla^2 G(\x) \x = (2n-1) \nabla G(\x)$ is parallel to $\Psi(\x)$.
\end{proof}

\begin{lemma} \label{lemma:center_stable_manifold}
Assume the setting of \cref{eq:maxf,eq:iter1,eq:shift-convex-1,eqn:G,eq:max_F}. 
Let $\x_* \in \mathbb{S}^{\dimL-1}$ be a first-order critical point of \eqref{eq:maxf} but not a second-order point. 
 Then there exist an open neighborhood $B_{\x_*} \subset \mathbb{S}^{\dimL-1}$ of $\x_*$ and 
	a smoothly embedded disk $D_{\x_*}$ containing $\x_*$ of dimension strictly less than $\dimL-1$ satisfying
	\begin{equation} \label{eq:helpful-cs}
		\left( \Psi^{k}(\x) \in B_{\x_*}   \, \forall \, k \geq 0 \right) \Rightarrow \x \in D_{\x_*}.
	\end{equation}
\end{lemma}
\begin{proof}
By assumption, $\rhess F(\x_*)$ has an eigenvalue which is strictly positive.  Then \eqref{eq:jac-firstx} implies $D\Psi(\x_*)$ has an eigenvalue that exceeds $1$, using $F(\x_*) + \gamma  = G(\x_*) > 0$ from strict convexity of $G$.  The conclusion is now immediate from the center-stable manifold theorem; see \cref{thm:CMT} in which we take $\mathcal{M} = \mathbb{S}^{d-1}$.
\end{proof}

We have all ingredients to prove that SS-HOPM almost always converges to stable eigenvectors.
\begin{theorem}[Almost always convergence of SS-HOPM to second-order critical point] \label{thm:noSaddles}
Assume the setting of 
\cref{eq:maxf,eq:iter1,eq:shift-convex-1,eqn:G,eq:max_F}. 
Then there is a Lebesgue-measure zero subset $\Omega \subset \mathbb{S}^{d-1}$ such that for all  initializations $\x_1 \in \mathbb{S}^{d-1} \setminus \Omega$, the sequence \eqref{eq:iter1} converges to a second-order critical point of \eqref{eq:maxf}.
\end{theorem}
\begin{proof}
Let $\mathcal{C} \subset \mathbb{S}^{d-1}$ denote the set of ``bad'' critical points, i.e., first-order but not second-order critical points $\x_*$ of \eqref{eq:maxf}.  Let $\mathcal{I} \subset \mathbb{S}^{d-1}$ denote the set of ``bad'' initializations, i.e., $\x_1$ such that \eqref{eq:iter1} converges to an element of $\mathcal{C}$.  By \cref{thm:always_converge}, we know that for all initializations \eqref{eq:iter1} converges to a first-order critical point of \eqref{eq:maxf}.  Therefore it suffices to prove that $\mathcal{I}$ is measure zero in $\mathbb{S}^{d-1}$.

For each $\x_* \in \mathcal{C}$, by Lemma~\ref{lemma:center_stable_manifold} there exist 
 an open neighborhood $B_{\x_*} \subset \mathbb{S}^{\dimL-1}$ of $\x_*$ and 
	a smoothly embedded disk $D_{\x_*} \subset \mathbb{S}^{d-1}$ containing $\x_*$ of dimension strictly less than $\dimL-1$ satisfying \eqref{eq:helpful-cs}.
By second countability of $\mathbb{S}^{d-1}$, there exists a countable subset $\mathcal{C}' \subset \mathcal{C}$ such that 
\begin{equation} \label{eq:myLindelof}
\bigcup_{\x_* \in \mathcal{C'}}  B_{\x_*} \,\, = \,\, \bigcup_{\x_* \in \mathcal{C}}  B_{\x_*},
\end{equation}
see the Lindel\"of property  \cite[p.~191]{munkres}.

Consider $\x_1 \in \mathcal{I}$. There exist $\x_* \in \mathcal{C}'$ and $K \in \mathbb{N}$ such that $\Psi^{k}(\x_1) \in B_{\x_*}$ for all $k \geq K$; indeed, take $\x_* \in \mathcal{C'}$ satisfying $\lim_{k \rightarrow \infty} \Psi^{k}(\x_1) \in B_{\x_*}$ and use that $B_{\x_*}$ is open.  
By \eqref{eq:helpful-cs}, this implies $\Psi^{K}(\x_1) \in D_{\x_*}$, or $\x_1 \in \Psi^{-K}(D_{\x_*})$ where $\Psi^{-K}$ denotes preimage via $\Psi^{K}$. Ranging over $\x_*$ and $K$, it follows 
\begin{equation} \label{eq:big-union}
\mathcal{I} \,\, \subset \, \bigcup_{\x_* \in \mathcal{C}'} \,\, \bigcup_{K \geq 0} \,\, \Psi^{-K}(D_{\x_*}).
\end{equation}

Note that each $D_{\x_*}$ has measure zero in $\mathbb{S}^{d-1}$ since its dimension is strictly less than $d-1$.  Further,  each  $\Psi^{-k}(D_{\x_*})$ has measure zero because $\Psi^{k}$ is a local diffeomorphism by \cref{eq:local-diffeo-2}.  Therefore the right-hand side in \eqref{eq:big-union} has measure zero, being a countable union of measure zero subsets.
Hence $\mathcal{I}$ has measure zero too.  %
\end{proof}

The results on SS-HOPM from this subsection may be of independent interest.  

\subsection{Local linear convergence to $\pm \a_i$} \label{subsec:SSHOPM-connect}%
We return to SPM specifically, and prove the local attractiveness claim in \cref{thm:power}.

\begin{theorem}\label{thm:local_linear_convergence}
Let $r \leq \binom{d+n-1}{d} - d +1$, and $\a_1, \ldots, \a_r \in \mathbb{S}^{d-1}$ be Zariski-generic.  Then the \textsc{Power Method} \eqref{eq:iter} has local linear convergence to each of the $\pm \a_i$'s.
\end{theorem}
\begin{proof}
Denote \eqref{eq:iter} as $\x_{k+1} = \Psi(\x_{k})$ like in \cref{subsec:sharpen}, but with $F = F_{\cA}$ where $\cA = \operatorname{span} \{\a_1^{\otimes n}, \ldots, \a_r^{\otimes n}\}$.  
Clearly $\pm \a_i$ are fixed points of $\Psi$, thus by \cite[p.~18]{rheinboldt1974methods} we need to show that $D\Psi(\pm \a_i)$ has spectral norm strictly less than $1$.  For notational ease, we just consider $\a_i$ as the result for $-\a_i$ will be immediate by evenness of $F_{\cA}$.

From \eqref{eq:jac-firstx} and \eqref{eq:def-riem-hess},
\begin{equation} \label{eq:Dphi-locallinear}
D\Psi(\a_i) \,\, = \,\, \frac{1}{2n(1+\gamma)} \big{(}   (\Id - \a_i \a_i^{\Tr}) \nabla^2 F_{\cA}(\a_i) (\Id - \a_i \a_i^{\Tr}) \, + \, 2n\gamma (\Id - \a_i \a_i^{\Tr}) 
\big{)},
\end{equation}
where we used $F_{\cA}(\a_i) = 1$ and $\a_i^{\Tr} \nabla F_{\cA}(\a_i) = 2n F_{\cA}(\a_i) = 2n$.  Notice that $D \Psi(\a_i)$ is self-adjoint.  Thus to bound its spectral norm, we can show that for all $\y$ with $\langle \y, \a_i \rangle = 0$ and $\| \y \|=1$ it holds  $\big{|} \y^{\Tr} D \Psi(\a_i) \y \big{|} < 1$.  Inserting \eqref{eq:Hexpansion} into \eqref{eq:Dphi-locallinear}, 
\begin{align}
\y^{\Tr} D \Psi(\a_i) \y &= \frac{1}{2n(1+\gamma)} \big{(} \y^{\Tr} \nabla^2 F_{\cA}(\a_i) \y +  2n\gamma \big{)}\nonumber
\\[0.05em]
&= \frac{1}{1+\gamma} \big{(} n \| P_{\cA}(\a_i^{\otimes (n-1)} \otimes \y) \|^2 + (n-1) \langle P_{\cA}(\a_i^{\otimes n}), \a_i^{\otimes (n-2)} \otimes \y^{\otimes 2}\rangle + \gamma \big{)} \nonumber \\[0.05em]
&= \frac{1}{1+\gamma} \big{(} n \| P_{\cA}(\a_i^{\otimes (n-1)} \otimes \y) \|^2 + (n-1) \langle \a_i^{\otimes n}, \a_i^{\otimes (n-2)} \otimes \y^{\otimes 2} \rangle + \gamma \big{)} \nonumber \\[0.05em]
&= \frac{1}{1+\gamma} \big{(} n \| P_{\cA}(\operatorname{sym}(\a_i^{\otimes (n-1)} \otimes \y)) \|^2 + \gamma \big{)}, \nonumber
\end{align}
where we used $P_{\cA}(\a_i^{\otimes n}) = \a_i^{\otimes n}$, 
that $P_{\cA}$ is a projector onto a subspace of symmetric tensors, and 
$\langle \a_i^{\otimes n}, \a_i^{\otimes (n-2)} \otimes \y^{\otimes 2} \rangle = \langle \a_i, \a_i \rangle^{n-2} \langle \a_i, \y \rangle^2 = 0$ by \cref{lemma:inner_tensor_powers}.
Therefore
\begin{equation*}
\big{|} \y^{\Tr} D\Psi(\a_i) \y \big{|} \leq \frac{1}{1+\gamma}\big{(} n \| \operatorname{sym}(\a_i^{\otimes (n-1)} \otimes \y) \|^2 + \gamma \big{)} = 1,
\end{equation*}
with equality if and only if $\operatorname{sym}(\a_i^{\otimes (n-1)} \otimes \y) \in \cA$.   Thus the theorem follows from the proposition below.
\end{proof}

\begin{proposition}\label{prop:transverse}
Assume $r \leq \binom{d+n-1}{d} - d +1$.  Let $\a_1, \ldots, \a_r \in \mathbb{S}^{d-1}$ be Zariski-generic and $\cA = \operatorname{span}\{ \a_1^{\otimes n}, \ldots, \a_r^{\otimes n}\}$.  Then for each $i$ it holds
$$  \big{\{} \! \operatorname{sym}(\a_i^{\otimes (n-1)} \otimes \y) : \langle \y, \a_i \rangle =0 \big{\}} \, \cap \, \cA  \,\, = \,\, 0.$$ 
\end{proposition}

\begin{proof}
Without loss of generality, $i=1$.  A point in the intersection is of the form
\begin{equation}\label{eq:intersection-my} \alpha_1 \a_1^{\otimes n} + \alpha_2 \a_2^{\otimes n} + \ldots + \alpha_r \a_r^{\otimes n} = \operatorname{sym}(\y \otimes \a_1^{\otimes (n-1)}) 
\end{equation}
for some $\alpha_j \in \mathbb{R}$ and $\y$ with $\langle \y, \a_i \rangle = 0$.  
Write $\pi$ for the projector from $\cT_\dimL^m$ to the orthogonal complement of $\operatorname{sym}(\mathbb{R}^d \otimes \a_1^{\otimes (n-1)})$, and apply it to \eqref{eq:intersection-my} to obtain 
\begin{equation}\label{eq:I-applied-pi}
\alpha_2 \pi(\a_2^{\otimes n}) + \ldots + \alpha_r \pi(\a_r^{\otimes n}) = 0.
\end{equation}
Note that $\pi(\a_2^{\otimes n}), \ldots, \pi(\a_r^{\otimes n})$ 
are $r-1$ generic points in the closure of the projection of the affine cone over the Veronese variety under $\pi$, and that this variety linearly spans its ambient space which has linear dimension 
$\binom{d+n-1}{d}-d$.  Since $r -1 \leq \binom{d+n-1}{d}-d$ by assumption, $\pi(\a_i^{\otimes n})$ are linearly independent.  Hence \eqref{eq:I-applied-pi} implies $\alpha_2 = \dots = \alpha_r = 0$.  
Returning to \eqref{eq:intersection-my}, it follows $\alpha_1 = 0$ and 
$\y = 0$, hence the intersection is zero.
\end{proof}

We remark that \cref{prop:transverse} has a geometrical interpretation.  Namely, when the projectivization of $\cA$ and the Veronese variety are expected to have a zero-dimensional intersection, they in fact intersect \textit{transversely} at each of $\a_i^{\otimes n}$ (see \cite{harris1992algebraic}).

\subsection{{Putting it together}}
To sum up, the convergence analysis is complete. 

\begin{proof}[Proof of \cref{thm:power}]
That $\gamma > \sqrt{\frac{n-1}{n}}$ is a sufficient shift to guarantee strict convexity follows from \cref{lem:shift}.
The first bullet of \cref{thm:power} follows from \cref{thm:always_converge}.  
The second bullet follows from \cref{thm:noSaddles}.
The first sentence of the third bullet is due to \cref{prop:program_global_maxima}, and the rest is by \cref{thm:local_linear_convergence}.
\end{proof}

Beyond the results proven here, in the next section we see empirically that the \textsc{Power Method} is robust to noise (see \cref{sec:noise_stability}) and that it very often converges to global maxima (see \cref{sec:optimization_landscape}).


\section{Numerical Experiments}\label{sec:tdsimulations}

In this section, we present numerical tests of SPM.  We provide comparisons of runtime, accuracy and noise stability against existing state-of-the art algorithms for symmetric CP decomposition. 
We show comparisons with the following algorithms:
\begin{itemize}
\item Fourth-Order Only Blind Identification (FOOBI) \cite{foobi2007}: a polynomial-time algorithm to decompose fourth order tensors with rank $r\le \mathcal{O}(d^2)$. We use the official MATLAB implementation, provided in Tensorlab+ \cite{vervliet2016tensorlab}.
\item CP decomposition through simultaneous diagonalization/generalized eigenvalue decomposition (GEVD) \cite{leurgans1993decomposition}: a polynomial-time algorithm to decompose third order tensors with rank $r\le d$. We use the implementation in Tensorlab \cite{vervliet2016tensorlab}.
\item The method described in the paper \cite{nie2017low}, which we abbreviated as LRSTA. It is a provable algorithm that decomposes any $m$-order tensors with rank $r~\le~\mathcal{O}(d^{\floor{\frac{m-1}2}})$. We use the official MATLAB implementation, available on the author Nie's webpage, with its default parameters.
\item CP decomposition through non-linear least squares (NLS) \cite{vervliet2016}: a non-guaranteed algorithm that 
employs a Gauss-Newton method to solve the non-convex least-squares optimization problem
\begin{equation}\label{eqn:tensorlabopt}
	\textup{argmin}_{\Mx{A}\in \R^{\dimL\times r}} \Big{\|}\TT - \sum_{i=1}^{r}  \a_{i}^{\otimes m}\Big{\|}^2.
\end{equation}
We use the implementation in Tensorlab (\textsf{ccpd\_nls}) \cite{vervliet2016tensorlab}. We set the maximum number of the second-order iterations to $500$, and the gradient and function tolerances (as stop criteria) to $1 \times 10^{-12}$ and $1 \times 10^{-24}$ respectively, in order to obtain $\Mx{A}$ up to an error bounded by $10^{-12}$.
\end{itemize}
The first three methods are considered leading among provable algorithms, while the latter is a leading method among heuristic algorithms.

In SPM, we use the most-square matrix flattening in the \textsc{Extract Subspace} step (i.e., $n = \lceil m/2 \rceil$ in \eqref{eq:mattensor}). 
For the fairest comparisons we provide all methods with the ground-truth rank, although this can be computed by SPM and FOOBI (see \Cref{sec:practical}).  
All numerical experiments are performed on MATLAB on a personal laptop with a Intel\textsuperscript\textregistered\ Core\texttrademark\ i7-7700HQ CPU and 16.0GB of RAM. The implementation of SPM (in MATLAB and Python), and the code to run the experiments, are available at {\texttt{\url{https://www.github.com/joaompereira/SPM}}}.

\begin{remark} \label{rem:adaptive}
For a modest performance benefit, we implement SPM using adaptive shifts $\gamma_k$, rather than a constant step size $\gamma$.  
Similarly to \cite{kolda2014adaptive}, we choose a
smaller shift at $\x_k$ according to how close $F_{\cA}(\x)$ is to being locally convex around $\x_k$.
Specifically, by \cref{lem:shift} we modify \textsc{Power Method} to the following:
\begin{equation} \label{eq:iter3}
\x_{k+1} = \frac{\dotp{P_{\cA}(\x_k^{\otimes n})}{\x_k^{\otimes(n-1)}} + \gamma_k \x_k} {\|\dotp{P_{\cA}(\x_k^{\otimes n})}{\x_k^{\otimes(n-1)}} + \gamma_k \x_k\|} \quad \,\, \textup{where } \,\,  \gamma_k = \sqrt{\frac{n-1}{n}}h(F_{\cA}(\x_k)),
\end{equation} 
with $h$ defined as in \cref{lem:shift}. This leads to a slight improvement, but it doesn't affect the results qualitatively.  
\end{remark}

\subsection{Runtime comparison}\label{sec:comptimecomp}

Here we compare computation times for computing CP decompositions amongst SPM and the other methods.

In \cref{fig:time_order3}, we plot the computation time (in seconds) for computing the CP decomposition of $\TT$ by SPM, LRSTA, GEVD and NLS as a function of $\dimL$. For this experiment, we set $m=3$. FOOBI is omitted from this plot as it does not apply to third-order tensors. For several values of $\dimL$ ranging from $10$ to $300$, we generate several tensors as follows. 
The rank $r$ is set as $d$, and for each $i=1, \ldots, r$ we independently sample a vector $\mathbf{v}$ from a standard multivariate Gaussian distribution, put $\lambda_i = \|\mathbf{v}\|^m$ and $\a_i = \mathbf{v}/\| \mathbf{v}\|$, and then form $\TT$ as in \eqref{eq:tensordecdef}. For each value of $d$ we sample $20$ tensors in this manner, and report the average and 20\%/80\% quantiles of the runtimes of calculating the CP decomposition through each method. Further, we report the frequency of runs when the method obtains the correct tensor decomposition. We say this holds if the error is bounded as follows: $\Nrm{\TT - \TT[\widehat]}/\Nrm{\TT}<10^{-4}$. %

\begin{figure}
	\centering
	\begin{subfigure}[b]{0.48\textwidth}
		\centering
		\includegraphics[width=\textwidth]{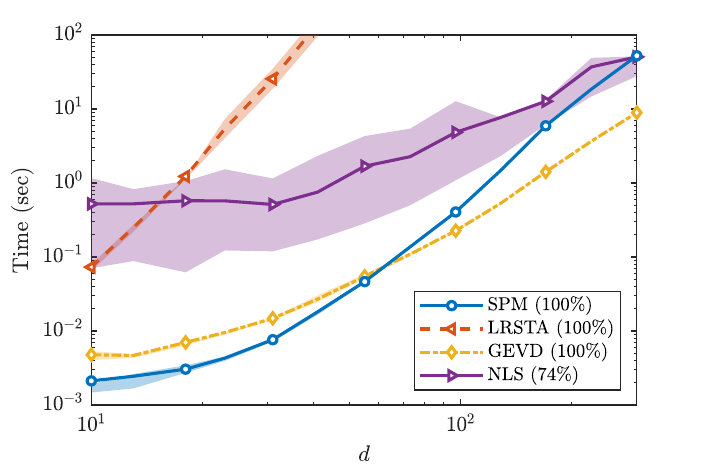}
		\caption{Tensors of  order 3 ($r= d$).}
		\label{fig:time_order3}
	\end{subfigure}
	\hfill
	\begin{subfigure}[b]{0.48\textwidth}
		\centering
		\includegraphics[width=\textwidth]{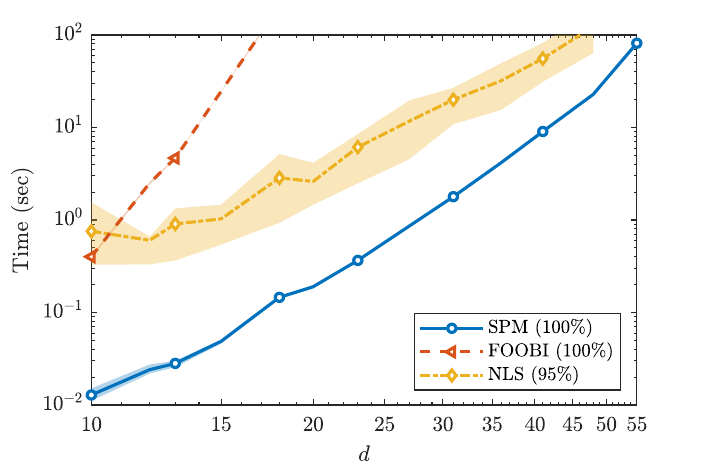}
		\caption{Tensors of  order 4 ($r= \lfloor d^2/3 \rfloor $)}
		\label{fig:time_order4}
	\end{subfigure}
	\caption{Comparison of the computation time between various CP decomposition algorithms. We plot the average computation time as curves, and the shaded areas correspond to the 20\% to 80\% quantiles. The percentages in the legends denote the average fraction of runs each algorithm returned a correct decomposition.}
	\label{fig:three graphs}
\end{figure}
 
In \cref{fig:time_order4}, we show a similar comparison for SPM, FOOBI and NLS. We set $m=4$, with generated $\TT$ as above, but with $\dimL$ ranging from $10$ to $55$ and $r=\left\lfloor \dimL^2/3\right\rfloor$. 
We do not include GEVD and LRSTA in this comparison as they are not able to decompose tensors of order $4$ of such high rank.
Like in the previous comparison, we compute the CP decomposition of 20 tensors by each algorithm and report the average,  standard deviation and  frequency of obtaining the correct tensor decomposition.  

A takeaway from \cref{fig:three graphs} is that SPM is a very competitive algorithm in terms of computation time.
In our experiments it outperforms both FOOBI and NLS for tensors of order 4. As a specific timing example, when $d=48$ the computation time for SPM is 22.8 seconds, while for NLS is 122.9 seconds.
We also observe that, while FOOBI's complexity is polynomial, the exponents are high ($\mathcal{O}(r^4 \dimL^2)$, which is $\mathcal{O}(d^{10})$ if $r = \mathcal{O}(d^2)$).  This makes FOOBI slower than alternatives especially for large fourth-order tensors. 
For tensors of order 3 (\cref{fig:time_order3}), SPM is only outperformed in terms of time by GEVD and its performance is similar to that of NLS for larger tensors.

Finally, note that the computation time of SPM varies less when compared with heuristic algorithms. Although we do not fully understand why this occurs, we believe it is due to the nice behaviour of SPM's optimization landscape. We study the landscape computationally in  \cref{sec:optimization_landscape}, and have theoretical analysis of it in  \nolinebreak \cite{kileel2021landscape}.

\subsection{Noise stability}\label{sec:noise_stability}

Many tensors that arise in real-data applications are not exactly low-rank, and therefore knowing that SPM works for approximately low-rank tensors is important. 
In this subsection, we compare the sensitivity to noise of SPM against the other methods.
For FOOBI and SPM we use the best rank-$r$ approximation of the tensor flattening, computed from the truncated SVD (or eigendecomposition when applicable). 

\begin{figure}
	\centering
	\begin{subfigure}[b]{0.48\textwidth}
		\centering
		\includegraphics[width=\textwidth]{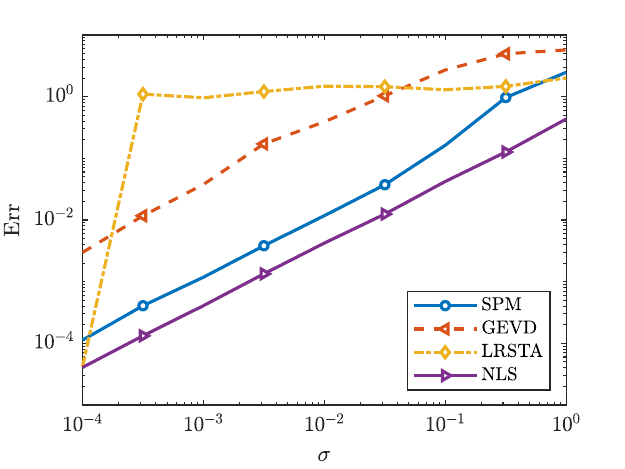}
		\caption{Tensor of order 3, dimension $d=28$ and rank $30$.}
		\label{fig:noise_order3}
	\end{subfigure}
	\hfill
	\begin{subfigure}[b]{0.48\textwidth}
		\centering
		\includegraphics[width=\textwidth]{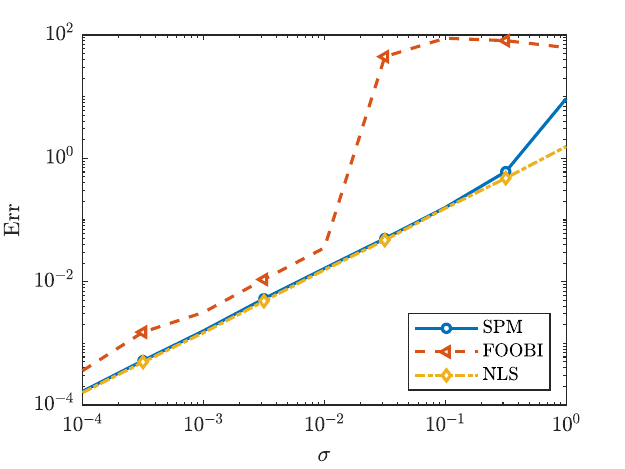}
		\caption{Tensor of order 4, dimension $d=15$ and rank $30$.}
		\label{fig:noise_order4}
	\end{subfigure}
	\caption{Noise stability comparison between various CP decomposition methods. Here, $\sigma$ is the standard deviation of the Gaussian noise added to the entries of tensor that is to be decomposed, and the error is calculated using \eqref{eq:noise_err}.}
\label{fig:SPM_noise}
\end{figure} 

We conduct two experiments comparing SPM's performance on tensors of order 3 and order 4. In these tests, we generate positively correlated CP components as these tensors reveal noise sensitivity better.  Specifically, for a given rank $r$ and for each $i=1, \ldots, r$ we independently sample a vector $\Vc{v}$ from a standard multivariate Gaussian distribution. Then, we set $\Vc[\tilde]{v}=\Vc{v}+\alpha\Vc{1}$, where $\alpha$ is a parameter to be defined later, put $\lambda_i = \|\Vc[\tilde]{v}\|^m$ and $\a_i = \Vc[\tilde]{v}/\|\Vc[\tilde]{v}\|$, and then form a low-rank tensor as in \eqref{eq:tensordecdef}. As defined, the expected correlation between two vectors is $\alpha^2/(1+\alpha^2)$. We set $\alpha=1$ as to have an average correlation of $1/2$. Finally, we add centered Gaussian noise with variance $\sigma^2$ to the entries of the low-rank tensor (in a symmetric manner); this produces an approximately low-rank tensor  to be decomposed by the different algorithms.

In the first experiment, we set $m=3$, $d=30$, $r=28$, and compare SPM with NLS, LRSTA and GEVD, and in the other we set $m=4$, $d=15$, $r=30$, and compare SPM with NLS and FOOBI. In \cref{fig:SPM_noise}, we plot for each algorithm the error, measured using the following formula
\begin{equation}\label{eq:noise_err}
	\text{Err}:= \left\| \TT - \TT[\widehat]\right\| \quad\text{where}\quad \TT = \sum_{i=1}^r \lambda_i \sop{\Vc{a}_i},\quad \TT[\widehat] = \sum_{i=1}^r \hat \lambda_i \sop{\Vc[\hat]{a}_i},
\end{equation}
where $\hat \lambda_i$ and $\sop{\Vc[\hat]{a}_i}$ are the tensor coefficients and tensor components obtained by each algorithm. 

As can be observed in \cref{fig:SPM_noise}, NLS is the most stable to noise, which may be expected since it explicitly minimizes the $\ell^2$-norm. 
However, the sensitivity to noise of SPM is very similar to that of NLS for tensors of order 4, up to the noise level $10^{-1}$. 
For larger noise levels, SPM is less accurate.
We suspect that this is related to a spiked eigenvalue model for the flattening: at that noise level, the eigenvectors of the flattening coming from noise start to overwhelm eigenvectors of the flattening from the signal. Meanwhile, in this comparison FOOBI appears to be much less stable.

For tensors of order 3, SPM is less stable than NLS. We believe that this is due to the matrix of tensor components being poorly-conditioned, which results in a higher error in the subspace estimation from the flattening step in SPM. Nevertheless, SPM is still considerably more stable than LRSTA and GEVD.

\subsection{Optimization landscape}
\label{sec:optimization_landscape}

We do an experiment that tests the optimization landscape from the \textsc{Power Method} step in SPM. 
Recalling that we proved  \textsc{Power Method}  converges to a second-order critical point of \eqref{eq:optim}, we study how often the iterates converge to global maxima as compared to spurious second-order critical points (i.e., ones that are not global maxima). When SPM converges to a second order critical point $\x_* \in \mathbb{S}^{\dimL-1}$, we know it is a global maximum if and only if $F_{\cA}(\x_*)=1$.  By \cref{prop:subspacerank1generic}, we know up to which rank it holds that the global maxima of \eqref{eq:optim} are generically precisely the desired tensor components (up to sign).

Our experiment is as follows.   We consider fourth-order tensors ($m=4$ and $n=2$), fix the length $\dimL=20$ and vary $r$ from $120$ to $200$. Note that the rank threshold given by \cref{prop:subspacerank1generic} is $r= \binom{21}{2}=190$. For each rank, we sample $r$ independent and identically distributed standard Gaussian vectors $\{\a_{i}\}_{i=1}^r$ and form the corresponding subspace $\cA = \Span\{\a_{i}^{\otimes 2} : i=1,\dots,r\}$.  Since the experiment just tests \textsc{Power Method}, there is no need to generate coefficients $\lambda_i$ because the \textsc{Power Method} just operates on $\cA$.  We sample a single initialization vector $\x_0$ uniformly on $\mathbb{S}^{\dimL-1}$ and run \textsc{Power Method} starting at $\x_0$.
We regenerate both $\cA$ and $\x_0$ ten thousand times, and report the relative frequency of times the \textsc{Power Method}:

\vspace{0.3em}

\begin{enumerate}
	\item[{\fcolorbox{black}[rgb]{0, 0.3470, 0.6410}{\phantom{c}}}]
	converged to a tensor component: $\|\x_* - \y\|\le 10^{-10}$ for some $\y\in \{\pm\a_{i}\}_{i=1}^r$;\smallskip%
	\item[{\fcolorbox{black}[rgb]{0.9790, 0.8040, 0.2250}{\phantom{c}}}] 
	converged to a second-order critical point $\x_*$ that is not a global maximum: \smallskip\\
    \phantom{ }\hspace{.6cm} $\displaystyle F_{\cA}(\x_*) < 1 - 10^{-10}$;\smallskip 
	\item[{\fcolorbox{black}[rgb]{0.7500, 0.2750, 0.0980}{\phantom{c}}}]
	converged to a global maximum $\x_*$ that is not a tensor component: \smallskip\\
    \phantom{ }\hspace{.6cm} $\displaystyle F_{\cA}(\x_*) \ge 1 - 10^{-10} \qtext{and}\|\x_* - \y\|> 10^{-10} \text{ for all } \y\in \{\pm\a_{i}\}_{i=1}^r;$ \smallskip
	\item[{\fcolorbox{black}[rgb]{0.4660, 0.6740, 0.1880}{\phantom{c}}}]
	did not converge in $5000$ power method iterations: $\|\x_{5000}-\x_{4999}\|>10^{-10}$.
	\bigskip
\end{enumerate}

\begin{figure}
	\centering
	\includegraphics[width=.9\textwidth]{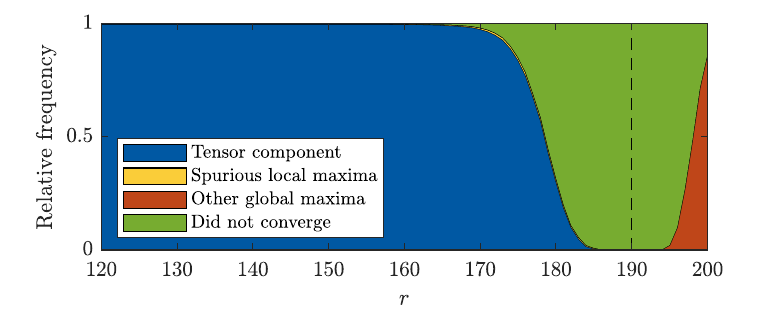}
	\caption{Relative frequency of convergence of the \textsc{Power Method} at different ranks $r$ over $10000$ trials when $n=2$ and $d=20$.  Each run, the CP components are random and we use only one random initialization (with no re-initializations). The dashed line shows the threshold given by \cref{prop:subspacerank1generic}.}
	\label{fig:pmlocalmaxfreq}
\end{figure}

The results are shown in \cref{fig:pmlocalmaxfreq}.  For all values of $r$ smaller than $140$, \textsc{Power Method} converges to one of the tensor's components every time over the $10000$ trials. 
Between ranks $140$ to $160$ the frequency is at least $99.8\%$, and it decreases to $97.6\%$ at $r=170$. Then, we observe a sharp transition when the rank varies between $170$ and $190$. Near the cutoff, many runs do not converge.  Although by \cref{thm:power} the runs would eventually  converge, \textsc{Power Method} needs more than $5000$ iterations.
We note that the width of the transition in \cref{fig:pmlocalmaxfreq} is $\approx \dimL$. The results suggest that if the rank scales like $c_n \dimL^n$, for a constant $c_n<\frac{1}{n!}$ (since $\binom{\dimL+n-1}{n} \approx \frac1{n!}\dimL^n$), \textsc{Power Method} converges to a CP component from a \textit{single} initialization with high probability.  Therefore, the optimization landscape for \eqref{eq:optim} seems well-behaved.

In \cite{kileel2021landscape} we investigate this phenomenon and characterize local maxima of the SPM functional.
There we prove that if $\{\a_{i}\}_{i=1}^r$ are drawn i.i.d. uniformly from $\mathbb{S}^{d-1}$, then with high probability should spurious local maxima exist their function value is rather small: $\mathcal{O}(r \log^n(r)/d^n)$.
We also provide theoretical guarantees under deterministic frame conditions, holding for example when $\a_i$ are approximately orthogonal.

\subsection{Real world application}
\label{sec:real_world}

In this subsection, we test SPM on a real world application. 
Our purpose is to show effectiveness of SPM in a setting where we do not control the data generation.

Specifically, we use SPM for eye blink artifact removal in real electroencephalogram (EEG) datasets \cite{li2006blinking}. A typical model considered for this purpose is Independent Component Analysis (ICA) \cite{hyvarinen2004independent}. This is a statistical model that assumes the data arises from a linear combination of independent data sources. Formally, ICA assumes that data samples $\x_{1},\dots \x_{p} \in \R^d$ arise from the following statistical model:
$$\x_{i} = \Mx{A} \Vc{s}_{i},\quad i=1,\dots,p,$$

\begin{figure}
	\centering
	\begin{subfigure}[t]{0.32\textwidth}
		\centering
		\includegraphics[width=\textwidth]{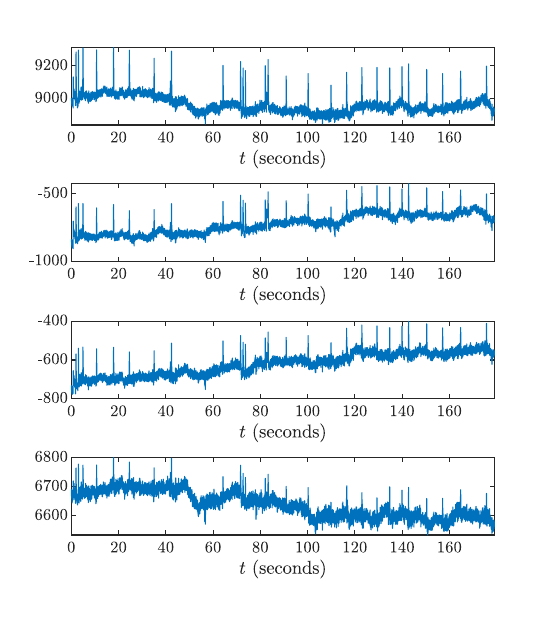}
		\caption{4 out of the 64 EEG signals.}
		\label{fig:EEG_data}
	\end{subfigure}
	\hfill
	\begin{subfigure}[t]{0.32\textwidth}
		\centering
		\includegraphics[width=\textwidth]{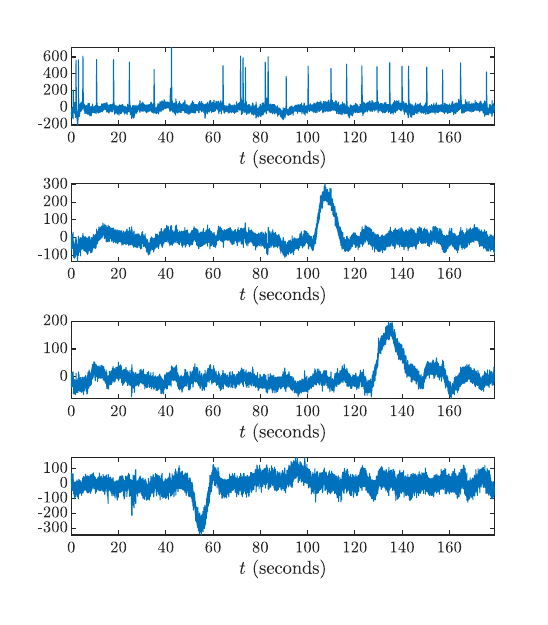}
		\caption{4 out of 64 ICA Components learnt using SPM to decompose the  cumulant tensor.}
		\label{fig:ICA_components}
	\end{subfigure}
	\hfill
	\begin{subfigure}[t]{0.32\textwidth}
		\centering
		\includegraphics[width=\textwidth]{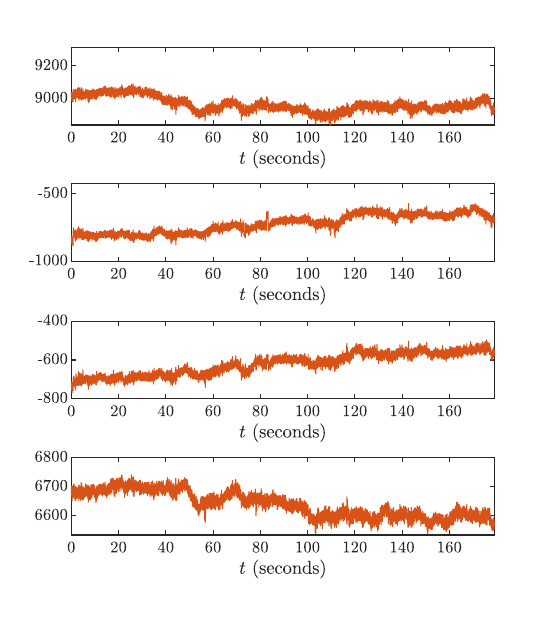}
		\caption{EEG signals of \cref{fig:EEG_data}, denoised by removing the learnt eye-blink artifact component.}
		\label{fig:EEG_data_denoised}
	\end{subfigure}
	\caption{Real EEG data which is analyzed and denoised assuming an ICA model. The EEG data in \cref{fig:EEG_data} contains eye-blink artifacts, which are removed in \cref{fig:EEG_data_denoised} by finding the eye-blink artifact signal using SPM (\cref{fig:ICA_components}).}
\label{fig:ICA}
\end{figure} 
\noindent where $\Mx{A}$ is a $d\times r$ matrix, called the coefficient or mixing matrix, and $\Vc{s}_{1},\dots,\Vc{s}_{p}\in \R^r$ are i.i.d. random vectors with independent entries. Although this is not a parametric statistical model, the independence condition between the elements of $\Vc{s}$ allows us to recover the mixing matrix. Moreover, if $r\le d$, the original independent components may be obtained from the data by multiplication with the pseudo-inverse of $\Mx{A}$. More specific to our setting, the higher-order cumulants of the ICA model are symmetric tensors with a low-rank CP structure \cite{foobi2007}, that allow for recovering $\Mx{A}$:
\begin{equation}\label{eq:ICA_cumulant}
\Tn{K}^{(n)} = \sum_{j=1}^r \kappa_j^{(n)} \Vc{a}_j^{\otimes n}.
\end{equation}
Here, $\Tn{K}^{(n)}$ denotes the model cumulant, $\Vc{a}_1,\dots, \Vc{a}_r$ are the columns of $\Mx{A}$, and $\kappa_j^{(n)}$ is the $n$-th cumulant of the $j$-th element of $\Vc{s}$.

In this experiment, we consider the EEG dataset presented and first analysed in \cite{chavarriaga2010learning}, which is the 22\textsuperscript{nd} dataset available to download in \url{https://bnci-horizon-2020.eu/database/data-sets}. Our analysis is summarized by \cref{fig:ICA}. The dataset consists of recorded EEG signals of 6 subjects, over 2 sessions and 10 runs per session, of 3 minutes each. In each run, we observe the simultaneous EEG signal over 64 electrodes placed on the subject, recorded at a 512 Hz frequency. For this demonstration, we analyze only the first run of the first session of subject $1$. EEG signals recorded at four electrodes (out of 64) are plotted in \cref{fig:EEG_data}.

We form the empirical 3rd order cumulant, which is a $64 \times 64 \times 64$ symmetric tensor, and then decompose it using SPM, taking the low-rank decomposition \eqref{eq:ICA_cumulant} with $r=64$ as an  ansatz. With the approximation for the ICA mixing matrix $\Mx[\hat]{A} \in \R^{64 \times 64}$ obtained by SPM, we multiply the original real data by its approximate inverse\footnote{We consider an approximate inverse since the mixing matrix obtained by SPM is ill-conditioned.}, ($(\Mx[\hat]{A}'\Mx[\hat]{A}+0.01\Mx{I})^{-1} \Mx[\hat]{A}'$), to obtain the ICA components. Four of the $64$ ICA components obtained are plotted in \cref{fig:ICA_components}.

One observes that one of the ICA components (the first plot in \cref{fig:ICA_components}) contains the eye-blink artifacts of the signals, which are caused by the subject blinking during the recording. Because we learned the mixing coefficients in $\Mx[\hat]{A}$, we may subtract this ICA component, multiplied by its corresponding mixing coefficients, to effectively remove the eye-blink artifacts from the data. We remove the artifacts in this manner for the signals plotted in \cref{fig:EEG_data}, and plot the denoised signals in \cref{fig:EEG_data_denoised}.

Comparing \cref{fig:EEG_data,fig:EEG_data_denoised}, we conclude that SPM succeeded in removing the eye-blink artifacts and obtained relevant ICA components from the data.


\section{Discussion}\label{sec:discussion}

This paper introduced a new algorithm, called SPM, for low-rank symmetric tensor CP decomposition. 
The new algorithm is performant: it is faster than some state-of-the-art algorithms, at least in experiments with tensors of moderate ranks.
Another advantage is that SPM does not require prior knowledge of the target rank.
The algorithm brings together ideas from algebraic geometry and nonconvex optimization.  As such, we were able to establish a rich mathematical foundation for its properties. 

Several aspects of SPM warrant further analysis. We study the optimization landscape of SPM \eqref{eq:optim} in a follow-up paper \cite{kileel2021landscape}, where we obtain nonconvex optimization guarantees for decomposing low-rank and approximately low-rank tensors by SPM, in a random overcomplete setting as well as under deterministic frame conditions.  
Algorithmically, there are many possible  extensions of SPM worth exploring.  These include modifying it to decompose non-symmetric tensors or calculate block term decompositions.  We also want to extend SPM to settings where the tensor is presented in a specialized manner, for example as a moment tensor of given data samples \cite{sherman2019estimating}. 
%

\section*{Acknowledgements}
	We thank Emmanuel Abbe, Amir Ali Ahmadi, Tamir Bendory, Nicolas Boumal, Tamara G. Kolda, and Amit Singer for helpful comments. 
	J.K. was supported in part by the Simons Collaboration on Algorithms and Geometry, NSF DMS 2309782, NSF DMS 2436499, NSF CISE-IIS 2312746 and DE SC0025312. J.P. was supported in part by NSF BIGDATA IIS-1837992 and a start-up grant from the University of Georgia. J.K. and J.P. were both supported in part by start-up grants to J.K. from the College of Natural Sciences and Oden Institute at UT Austin.

\bibliographystyle{siamplain}


\appendix

\section{Additional Proofs}

\subsection{Proof of \cref{lem:shift}}
\label{app:proofshift}
\begin{proof}
The Euclidean Hessian of $F_{\cA}$  is  given by
	\begin{equation*}
\nabla^2 F_{\cA}(\x)=2n\sum_{i=1}^r n(\U_i \cdot \x^{\otimes (n-1)})(\U_i \cdot \x^{\otimes (n-1)})^{\Tr} + (n-1) \dotpb{\U_i}{\x^{\otimes n}}\U_i \cdot \x^{\otimes (n-2)},
\end{equation*}
where  $\U_1,\dots,\U_r \in \cS_\dimL^n$ are an orthonormal basis of $\cA$.
For the rest of the proof denote $\x\y:=\x\otimes \y$ and $\x^n := \x^{\otimes n}$.
Let $\x,\y\in\R^\dimL$ such that $\|\x\|=\|\y\|=1$. We have
\begin{align}
\nonumber \frac1{2n} \y^{\Tr} \nabla^2 F_{\cA}(\x) \y
&= n\sum_{i=1}^r \dotpb{\U_i}{\x^{n-1} \y}^2 + (n-1) \sum_{i=1}^r \dotpb{\U_i}{ \x^{n}}\dotpb{\U_i}{\x^{n-2} \y^{2}}\\
\label{eq:Hexpansion} &=n \|P_\cA(\x^{n-1} \y)\|^2 + (n-1) \dotpb{P_\cA(\x^{n})}{\x^{n-2} \y^{2}},\\
\label{eq:Hexpansion_ineq} &\ge (n-1) \dotpb{P_\cA(\x^{n})}{\x^{n-2} \y^{2}},
\end{align}
where \eqref{eq:Hexpansion} follows from \eqref{eq:PAdef2}. 
Define $\y[\bar] \in \R^\dimL$ and $\alpha,\beta\in \R$ such that $\langle \bar \y,  \x \rangle = 0$, $\|\y[\bar]\|=1$, $\alpha^2 + \beta^2=1$ and $\y = \alpha \y[\bar] + \beta \x$. 
Substituting this expression for $\y$ into \eqref{eq:Hexpansion_ineq}, and rearranging using $P_{\mathcal{A}}$ is self-adjoint and $P_{\mathcal{A}}^2 = P_{\mathcal{A}}$, %
\begin{equation*}\label{eq:c3}
\frac1{2n} \y^{\Tr} \nabla^2 F_{\cA}(\x) \y \ge (n-1)\dotpb{P_\cA(\x^{n})}{\beta^2\x^{n} + 2\alpha \beta \x^{n-1}\y[\bar] +\alpha^2 \x^{n-2}\y[\bar]^2}.
\end{equation*}
\def\newnu{z}
\!\!\!\! Define $\newnu=\|P_\cA(\x^{n})\|^2=\dotpb{P_\cA(\x^{n})}{\x^{n}}\ge \nu$.  Letting $Y = 2\alpha \beta \x^{n-1} \y[\bar] +\alpha^2 \x^{n-2}\y[\bar]^2$, we have
\begin{align}
\frac1{2n} \y^{\Tr} \nabla^2 F_{\cA}(\x) \y &\ge (n-1)\beta^2 \newnu +  (n-1)\dotpb{P_\cA(\x^{n})}{Y} \nonumber \\
&= (n-1)\beta^2 \newnu + (n-1)\dotpb{P_\cA(\x^{n})}{\Sym(Y)}, \label{eq:good-bound-hessian}
\end{align}
where the last equation follows from $P_\cA(\x^{n})$ being a symmetric tensor and \cref{lemma:sym_orthogonal_projection}. Again using \cref{lemma:sym_orthogonal_projection}, \cref{lemma:inner_tensor_powers} and $ \langle \y[\bar],  \x \rangle = 0$, 
we obtain $\dotpb{\x^{n}}{\Sym(Y)}=\dotpb{\x^{n}}{Y}= 0$. Thus the following Bessel's inequality holds:
\begin{align*}
\|P_\cA(\x^n)\|^2 &\ge \left\langle P_\cA(\x^n), \x^n \right\rangle^2 + \Big{\langle} P_\cA(\x^n), \frac{\Sym(Y)}{\|\Sym(Y)\|} \Big{\rangle}^2\!,
\end{align*}
which implies
$$\left\langle P_\cA(\x^n), \Sym(Y) \right\rangle \ge - \sqrt{\newnu-\newnu^2}\|\Sym(Y)\|.$$
We plug this in \eqref{eq:good-bound-hessian} to obtain
\begin{align}
\label{eq:bound-with-nu}\frac1{2n} \y^{\Tr} \nabla^2f(\x) \y &\ge (n-1)\beta^2 \newnu - (n-1)\sqrt{\newnu-\newnu^2} \|\Sym(Y)\|.
\end{align}
Now we calculate
\begin{align*}
\|\Sym(Y)\|^2 &= \left\|2\alpha \beta \Sym(\x^{n-1} \y[\bar]) +\alpha^2 \Sym(\x^{n-2}\y[\bar]^2)\right\|^2\\
&= 4\alpha^2 \beta^2 \|\Sym(\x^{n-1} \y[\bar])\|^2 +4\alpha^3 \beta \dotpb{\Sym(\x^{n-1} \y[\bar])}{\Sym(\x^{n-2} \y[\bar]^2)} \\
&\hspace{200pt}+ \alpha^4 \| \Sym(\x^{n-2}\y[\bar]^2)\|^2.
\end{align*}
We start by calculating $\|\Sym(\x^{n-2} \y[\bar]^2)\|^2$. 
Note that 
\begin{equation*}
\Sym(\x^{n-2} \y[\bar]^2) =  \binom{n}{2}^{\! -1} \left(\x^{n-2} \y[\bar]^2 + \ldots + \y[\bar]^2 \x^{n-2}\right),
\end{equation*}
where the sum includes all $\binom{n}{2}$ terms with $\y[\bar]$ in $2$ of the $n$ positions.  The terms are pairwise orthogonal and  of unit norm, because $\langle \y[\bar], \x \rangle = 0$ and $\|\x\|=\|\y[\bar]\|=1$.  So
\begin{equation*}
\|\Sym(\x^{n-2} \y[\bar]^2)\|^2=\binom{n }{ 2}^{\!\! -1}=\frac{2}{n(n-1)}.
\end{equation*}
Analogously, $\|\Sym(\x^{n-1} \y[\bar])\|^2=\frac1n$ and $\dotpb{\Sym(\x^{n-2} \y[\bar]^2)}{\Sym(\x^{n-1} \y[\bar])}=0$.  Therefore
\begin{equation}
\label{eq:symy_norm}
\|\Sym(Y)\|^2=\frac{4\alpha^2\beta^2}{n}+\frac{2\alpha^4}{n(n-1)}.
\end{equation}
Now plugging in \eqref{eq:symy_norm} in \eqref{eq:bound-with-nu}, 
letting $t = \beta^2$ and noting that $\alpha^2 = 1-t$, we have
	\begin{align*}
	\nonumber\frac1{2n} \y^{\Tr} \nabla^2f(\x) \y &\ge (n-1)t \newnu - (n-1)\sqrt{\newnu-\newnu^2}\sqrt{\frac{4t(1-t)}{n} + \frac{2(1-t)^2}{n(n-1)}} \\ 
	\nonumber&=: g(t,\newnu) \ge \min_{t\in [0,1]} g(t,\newnu)
	\end{align*}
	We now estimate this minimum.  We overview the computation, but omit the full details for brevity. First we check, by calculating the second derivative that, for $\newnu$ fixed, $g$ is a convex function of $t$. Then we find the minimum in $t$ by solving $\tfrac{\partial g}{\partial t}=0$, which amounts to solving a quadratic equation in $t$. Finally, because this is constrained minimization, we have to check if the minimum lies inside the interval, else the minimum is achieved at the boundary. This leads to a case by case expression:
	$$\tilde g(\newnu):=\min_{t\in [0,1]} g(t,\newnu) = \begin{cases}
	-\sqrt{\frac{2(n-1)\newnu(1-\newnu)}{n}}&\text{if }\newnu \ge \frac{2 (n-2)^2}{3 n^2-9 n+8}\\
	-\frac{(n-1) \left((n-2) \newnu-\sqrt{\frac{(n-1) \newnu \left(4n-6+\newnu(n^2 -5n +6)\right)}{n}}\right)}{2 n-3}
	&\text{if }\newnu < \frac{2 (n-2)^2}{3 n^2-9 n+8}.

\	\end{cases}$$
	It can be checked by taking derivatives that $\tilde g(z)$ is also a convex function, and that we always have $\frac{2 (n-2)^2}{3 n^2-9 n+8} \le \frac23$. Furthermore, $\newnu\ge \nu$, hence
	\begin{align}
	\nonumber\frac1{2n} \y^{\Tr} \nabla^2 F_{\cA}(\x) \y &\ge \tilde g(\newnu)\ge \min_{\newnu\in [\nu, 1]} \tilde g(\newnu) \nonumber \\
	&\ge\begin{cases}
	\tilde g(\nu)&\text{if }\nu > \frac23\\
	\tilde g\left(\frac23\right) + (\nu-\frac23)\tilde g'\left(\frac23\right)&\text{if }\nu \le \frac23
	\end{cases} \nonumber \\
	&= -\sqrt{\frac{n-1}{n}}h(\nu) \label{eq:hard-hess},
\	\end{align}
where $h(\nu)$ is defined in \eqref{eq:gamma_k}.  Therefore \eqref{eq:hessian_psd} holds.

For the last sentence of \cref{lem:shift}, by homogeneity it suffices to check that for  $\x \in \mathbb{S}^{d-1}$  the eigenvalues of $\nabla^2(F_{\cA}(\x) + \gamma(\x^{\Tr} \x)^n)$ are strictly positive.
We compute 
\begin{equation} \label{eq:my-easy-hess}
\nabla^2{(}\gamma (\x^{\Tr} \x)^n{)} = 4n(n-1) \gamma (\x^{\Tr} \x)^{n-2} \x \x^{\Tr} + 2n \gamma \|\x\|^{2n-2} \Id =  4n(n-1) \gamma \x \x^{\Tr} + 2n \gamma \Id,
\end{equation}
From \eqref{eq:my-easy-hess} and \eqref{eq:hard-hess} we obtain
$$
\frac{1}{2n} \y^{\Tr} \nabla^2(F_{\cA}(\x) + \gamma (\x^{\Tr} \x)^n) \y \geq -\sqrt{\frac{n-1}{n}} h(\nu) + \gamma \geq -\sqrt{\frac{n-1}{n}} + \gamma,
$$
which is strictly positive if $\gamma > \sqrt{\frac{n-1}{n}}$.  This finishes the proof of \cref{lem:shift}.
 \end{proof}

\subsection{Proof of \cref{lem:calc}}
\label{app:proofcalc}
\begin{proof}
Let $\x \in \mathbb{S}^{d-1}$. Simple calculations show that
\begin{small}
\begin{align*}
    \frac{\partial \Psi_i}{\partial x_j}  =  \frac{\partial}{\partial x_j} \left( {{\frac{\partial G(\x)}{\partial x_i}}} {/} {\| \nabla G(\x) \|} \right) =  \left( \frac{\partial^2 G(\x)}{\partial x_i \partial x_j} \| \nabla G(\x) \| - \frac{\partial G(\x)}{\partial x_i} \frac{\partial \| \nabla G(\x) \| }{\partial x_j} \right) \! {/} \| \nabla G(\x) \|^2, \!\!\!
\end{align*}
\end{small}
and 
\begin{align*}
\frac{\partial \| \nabla G(\x) \| }{\partial x_j} \, = \, \| \nabla G(\x)\|^{-1} \sum_{k=1}^d \frac{\partial G(\x)}{\partial x_k} \frac{\partial^2 G(\x)}{\partial x_j \partial x_k}  \, = \, \frac{{(}\nabla^2 G(\x) \nabla G(\x){)}_j}{\| \nabla G(\x) \|}.  
\end{align*}
Therefore, 
\begin{align*}
\frac{\partial \Psi_i}{\partial x_j}  =  \frac{(\nabla^2 G(\x))_{ij}}{\| \nabla G(\x)\|} - \frac{ (\nabla G(\x) \nabla G(\x)^{\Tr} \nabla^{2} G(\x))_{ij}}{\| \nabla G(\x) \|^3} = \left((\Id - \Psi(\x)\Psi(\x)^{\Tr}) \frac{\nabla^2 G(\x)}{\| \nabla G(\x) \|} \right)_{ij}.
\end{align*}
Viewing $D \Psi$ as a linear map between tangent spaces to the sphere, its domain is the tangent space to $\mathbb{S}^{d-1}$ at $\x$. 
By a harmless abuse we express this  by right-multiplying with the projector onto this tangent space.  It yields 
\begin{equation*}
D\Psi(\x) = (\Id - \Psi(\x) \Psi(\x)^{\Tr}) \frac{\nabla^2 G(\x)}{\| \nabla G(\x)\|} (\Id - \x \x^{\Tr}),
\end{equation*}
as wanted.

Next let $\x_* \in \mathbb{S}^{d-1}$ be a first-order critical point of \eqref{eq:maxf}.  Then $\Psi(\x_*) = \x_*$, thus
\begin{equation}\label{eq:my-app-eqn1}
D\Psi(\x_*) = (\Id - \x_* \x_*^{\Tr}) \frac{\nabla^2 G(\x_*)}{\| \nabla G(\x_*)\|} (\Id - \x_* \x_*^{\Tr}).
\end{equation}
Also $\x_*$ is a first-order critical point of \eqref{eq:maxf2}.
So the Riemmanian gradient satisfies $\operatorname{grad} G(\x_*) = 0$, implying (see \eqref{eq:def-riem-grad})  $\nabla G(\x_*) = (\x_*^{\Tr} \nabla G(\x_*)) \x_* = 2n G(\x_*) \x_* = 2n(F(\x_*) + \gamma)\x_*$, whence 
$\| \nabla G(\x_*) \| = 2n(F(\x_*) + \gamma)$.
We also have $\nabla^2 G(\x_*) = \nabla^2 F(\x_*) + \nabla^2{(}\gamma (\x_*^{\Tr} \x_*)^n{)} = \nabla^2 F(\x_*) + 4n(n-1) \gamma \x_* \x_*^{\Tr} + 2n \gamma \Id$ by \eqref{eq:my-easy-hess}.
Substituting into \eqref{eq:my-app-eqn1} we obtain
\begin{equation} \label{eq:so-close-3}
D\Psi(\x_*) = (\Id - \x_* \x_*^{\Tr}) \frac{\nabla^2 F(\x_*)}{2n(F(\x_*)+\gamma)}(\Id - \x_* \x_*^{\Tr}) + \frac{\gamma}{F(\x_*) + \gamma} (\Id - \x_* \x_*^{\Tr}).
\end{equation}
On the other hand, as in \eqref{eq:def-riem-hess} the Riemmanian Hessian is 
\begin{align} \label{eq:so-close-4}
\operatorname{Hess} F(\x_*) &= (\Id - \x_* \x_*^{\Tr}) \nabla^2 F(\x_*) (\Id - \x_* \x_*^{\Tr}) - (\x_*^{\Tr} \nabla F(\x_*)) (\Id - \x_* \x_*^{\Tr}) \nonumber \\
&= (\Id - \x_* \x_*^{\Tr}) \nabla^2 F(\x_*) (\Id - \x_* \x_*^{\Tr}) - 2n F(\x_*) (\Id - \x_* \x_*^{\Tr}).
\end{align}
Comparing \eqref{eq:so-close-3} and \eqref{eq:so-close-4} we conclude \eqref{eq:jac-firstx}.  This proves \cref{lem:calc}.  
 \end{proof}

\section{Background Statements}

\subsection[Schneider-Uschmajew's convergence result]{Convergence result \cite[Thm.~2.3]{schneider2015convergence}} \label{app:convergence-result}
For simplicity we state a special case of Schneider-Uschmajew's result, which is enough for our purposes.  

\begin{theorem} \label{thm:meta-converge}   \textup{(\cite[Thm.~2.3]{schneider2015convergence})}
Let $\mathcal{M} \subset \R^d$ be a compact Riemannian submanifold that is locally the image of a real analytic map out of a Euclidean space. 
Let $\mathcal{D} \subset \R^d$ be an open neighborhood of $\mathcal{M}$.  Suppose $F : \mathcal{D} \rightarrow \R$ is a real analytic function that is bounded below.  Let $\operatorname{grad} F$ denote the Riemannian gradient of the restriction of $F$ to $\mathcal{M}$. 
 Consider the  problem
\begin{equation} \label{eq:f-min-prob}
\max_{\x \in \mathcal{M}} F(\x).
\end{equation}
Suppose that $(\x_k)_{k=1}^{\infty} \subset \mathcal{M}$ is a sequence satisfying the following  three assumptions:
\begin{enumerate}[label=\textup{(\textbf{A\arabic*})},ref={\textup{(A\arabic*)}}]
\item \label{enum:SU_A1}
There exists $\sigma > 0$ such that for $k$ large enough, %
\begin{displaymath}
    F(\x_{k+1}) - F(\x_k) \geq \sigma \| \operatorname{grad} F(\x_k) \| \|\x_{k+1} - \x_k \|.
\end{displaymath}
\item \label{enum:SU_A2}
For large enough $k$, %
\begin{displaymath}
    \operatorname{grad} F(\x_k) = 0 \implies \x_{k+1} = \x_k.
\end{displaymath}
\item \label{enum:SU_A3}
There exists $\rho > 0$ such that for large enough $k$, %
\begin{displaymath}
    \|\x_{k+1} - \x_k \| \geq \rho \| \operatorname{grad} F(\x_k) \|.
\end{displaymath}
\end{enumerate}
Then $(\x_k)_{k=1}^{\infty}$ converges to a point $\x_* \in \mathcal{M}$ and there exist constants $C > 0$ and $\tau >1$ such that 
\begin{equation}
   \|\x_k - \x_* \| \leq C k^{-\tau}
\end{equation}
for all $k$.  Moreover, $\|\operatorname{grad} F(\x_k)\| \rightarrow 0$ as $k \rightarrow \infty$.
\end{theorem}

\subsection{Center-stable manifold theorem}\label{app:statementcsm}
This is stated for open sets in Euclidean space in \cite{shub2013global}.  However, by taking charts it holds on manifolds as below.

\begin{theorem}\textup{(\cite[Thm.~III.7(2)]{shub2013global})}\label{thm:CMT}
Let $\mathcal{M}$ be a smooth manifold, $\Psi: \mathcal{M} \rightarrow \mathcal{M}$ a local diffeomorphism, and $\x \in \mathcal{M}$ a fixed point of $\Psi$.  Then there exist an open neighborhood $B_{\x} \subset \mathcal{M}$ of $\x$ and a smoothly embedded disk $D_{\x} \subset \mathcal{M}$ containing $\x$ such that the following properties hold:
\begin{itemize}
\item $\{ \mathbf{y} \in \mathcal{M} : \Psi^k(\mathbf{y}) \in B_{\x} \,\, \forall k \geq 0 \} \subset D_{\x}$;
    \item $\dim(D_{\x})$ is the number of linearly independent eigenvectors of $D\Psi(\x)$ whose eigenvalues don't exceed $1$ in magnitude.
\end{itemize}
\end{theorem}

\end{document}